\newcommand{\C}{\mathbb{C}}
\newcommand{\M}{\mathcal{M}}
\newcommand{\Span}{\mathbf{Span}}
\newcommand{\RG}{\mathbf{RG}}
\newcommand{\Grpd}{\mathbf{Grpd}}
\newcommand{\Cat}{\mathbf{Cat}}
\newcommand{\MG}{\mathbf{MG}}
\newcommand{\UMG}{\mathbf{UMG}}
\newcommand{\PreGrpd}{\mathbf{PreGrpd}}
\newcommand{\Kite}{\mathbf{DiKite}}
\newcommand{\MKite}{\mathbf{MKite}}
\newcommand{\KockGrpd}{\mathbf{KockGrpd}}
\newcommand{\cod}{\mathrm{cod}}
\newcommand{\dom}{\mathrm{dom}}
\newcommand{\midop}{\mathrm{mid}}
\newtheorem{definition}{Definition}
\newtheorem{theorem}{Theorem}
\newtheorem{proposition}{Proposition}
\newtheorem{remark}{Remark}
\newtheorem{corollary}{Corollary}
\begin{document}

\date{PAB --- \today.}

\title[The Lawvere Condition]{The Lawvere Condition}

\author[N. Martins-Ferreira]{Nelson Martins-Ferreira$^{*}$}

\thanks{$(*)$ Polytechnic of Leiria, Portugal}
\address{Department of Mathematics, ESTG, Campus 2\\
Morro do Lena – Alto do Vieiro
Apartado 4163\\
2411-901 Leiria – Portugal}
\address{CDRSP, Rua de Portugal - Zona Industrial\\ 2430-028 Marinha Grande}
\email{martins.ferreira@ipleiria.pt}

\subjclass[2020]{18A05, 18A15, 18B40, 18C10, 18D35}



\begin{abstract}
The original Lawvere condition asserts that every reflexive graph admits a unique natural structure of internal groupoid. This property was identified by P.~T.~Johnstone, following a question by A.~Carboni and a suggestion by F.~W.~Lawvere, and it plays a central role in the characterization of naturally Mal’tsev categories.
A broad and conceptually rich generalization emerges when the condition is formulated relative to a chosen class of spans. In this setting, the familiar Mal’tsev situation is recovered when the class consists of internal relations (that is, jointly monic spans) in which case the condition states that every internal reflexive relation is an equivalence relation.
The purpose of this paper is to establish a comprehensive equivalence theorem that unifies the various categorical and diagrammatic formulations of the relative Lawvere condition.
Furthermore, this formulation retains its significance even beyond the context of categories with finite limits, extending, for example, to categories admitting pullbacks of split epimorphisms along split epimorphisms. In addition to providing a fresh perspective on previously established results, we present a new characterization involving Janelidze–Pedicchio pseudogroupoids.

\end{abstract}

\maketitle

\section{Introduction}

The Lawvere condition for a category asserts that every reflexive graph in that category admits a unique natural structure of internal groupoid. This insight was first highlighted by P. T. Johnstone, following a question raised by A.~Carboni that itself traced back to a suggestion of F. W. Lawvere. It is precisely this property that characterizes naturally Mal’tsev categories, linking reflexive graphs to the presence of natural Mal’tsev operations in an elegant and conceptual way \cite{PTJ,AC}.

A particularly compelling generalization arises when one formulates the Lawvere condition relative to a chosen class \( \M \) of spans, as developed in \cite{MF38}. In this relative setting, the classical Mal’tsev case is recovered when \( \M \) consists of internal relations, that is, jointly monic spans. Other choices of \( \M \) lead to broader, yet equally meaningful, categorical landscapes, highlighting the flexibility of this framework and its capacity to capture diverse structural phenomena.

The main purpose of this paper is to establish a comprehensive equivalence theorem that unifies the various categorical and diagrammatic formulations of the relative Lawvere condition. To this end, we examine several seemingly distinct conditions---the existence of canonical pregroupoid structures on spans in \( \M \), the compatibility between spans in \( \M \) and local products, the admissibility of kite diagrams, and the existence of sections or isomorphisms for certain forgetful functors---and show that they all express the same underlying principle (see Theorem \ref{thm Main extended} of Section \ref{sec: extending main thm}).

The basic definitions are collected in Section~\ref{sec: 2}. Briefly, a \emph{local product} is obtained by taking the pullback of one split epimorphism along another, and its \emph{span part} refers to the span formed by the two canonical projections (see Definition~\ref{def: basics} item (8) of Section \ref{sec: 2}). A class of spans \( \M \) is said to be \emph{closed under the kernel pair construction} if every span built up from zig-zag triples \((x, y, z)\in D(d,c)\) of a given span \((D, d, c) \in \M\) also belongs to \( \M \) (see Definition~\ref{def: basics} items (3) and (4) of Section \ref{sec: 2}).

These results hold under mild structural assumptions on the ambient category \( \C \), namely the existence of pullbacks of split epimorphisms along split epimorphisms. Such assumptions are satisfied in many familiar categories of algebraic and geometric interest, ensuring that the theory applies in broad and natural contexts.

\begin{theorem}\label{thm Main}
Let \( \C \) be a category with pullbacks of split epimorphisms along split epimorphisms, and let \( \M \) be a class of spans in \( \C \) that is closed under the kernel pair construction and contains the span part of every local product. Then the following conditions are equivalent:
\begin{enumerate}
\item The Lawvere Condition holds in $\C$ with respect to the class $\M$;
\item The forgetful functor $\Grpd(\C,\M)\to\RG(\C,\M)$ has a section;
\item The forgetful functor $\Cat(\C,\M)\to\RG(\C,\M)$ has a section;

\item For every diagram in \( \C \) of the form
\begin{equation}\label{diag: kite main thm}
\vcenter{\xymatrix@!0@=4em{
& E \ar@{-->}[dd]
\ar@<-.5ex>[dl]_-{p_1}
\ar@<.5ex>[dr]^-{p_2}
\\
A \ar@<-.5ex>[ru]_-{e_1} \ar[rd]_-{\alpha} 
&  & 
C \ar@<.5ex>[lu]^-{e_2} \ar[ld]^-{\gamma} \\
& D \ar[dl]_{d} \ar[rd]^{c} \\
D_0 && D_1
}}
\end{equation}
if:
\begin{enumerate}
    \item the pair \( (p_1, p_2) \) is jointly monic;
    \item the spans \( (D, d, c) \) and \( (E, p_1, p_2) \) belong to \( \M \);
    \item the following identities hold:
    \begin{align*}
    & p_1 e_1 = 1_A, \quad p_2 e_2 = 1_C, \\
    & (e_1 p_1)(e_2 p_2) = (e_2 p_2)(e_1 p_1), \\
    & \alpha p_1 (e_2 p_2) = \gamma p_2 (e_1 p_1), \\
    & d \alpha p_1 = d \alpha p_1 (e_2 p_2), \quad \\ & c \gamma p_2 = c \gamma p_2 ( e_1 p_1),
    \end{align*}
\end{enumerate}
 then there exists a unique morphism \( m \colon E \to D \) such that:
\begin{align}
m e_1 = \alpha, \quad
m e_2 = \gamma, \quad
dm =d\gamma p_2, \quad cm =c\alpha p_1.    \end{align}

\item For every diagram in \( \C \) of the form
\begin{equation}\label{diag: dikite (d,c)}
\vcenter{\xymatrix@!0@=4em{
A \ar@<.5ex>[r]^-{f} \ar[rd]_-{\alpha} 
& B \ar[d]^-{\beta}
\ar@<.5ex>[l]^-{r}
\ar@<-.5ex>[r]_-{s} & 
C \ar@<-.5ex>[l]_-{g} \ar[ld]^-{\gamma} \\
& D \ar[dl]_{d} \ar[rd]^{c} \\
D_0 && D_1
}}
\end{equation}
if:
\begin{align}
& fr=1_B=gs\\
& \alpha r=\beta=\gamma s\\
& d\alpha=d\beta f\\
& c\beta g=c\gamma
\end{align}
and \[ (D,d,c)\in \M\]
 then there exists a unique morphism \[ m \colon A\times_B C \to D \] such that:
\begin{align*}
& m \langle 1_A, sf \rangle = \alpha, \quad \\
& m \langle rg,1_C \rangle = \gamma,\quad \\
& dm =d\gamma \pi_2 ,\quad \\ 
& cm =c\alpha \pi_1.    \end{align*}

\item Every span $(D,d,c)\in \M$ is equipped with a unique morphism
\begin{align*}
p_D\colon{D(d,c)\to D}
\end{align*}
 such that given any three parallel morphisms $x,y,z\colon{Z\to D}$ with $dx=dy$ and $cy=cz$ the following conditions hold \begin{align*}
dp_D\langle x,y,z\rangle=dz,\quad cp_D\langle x,y,z\rangle=cx,\\
p_D\langle x,y,y\rangle=x ,\quad p_D\langle y,y,z\rangle=z,
\end{align*}
and moreover, given any commutative diagram of the form 
\begin{align*}
\xymatrix{D_0 \ar[d]_{f_0} & D \ar[d]^{f} \ar[l]_{d} \ar[r]^{c} & D_1\ar[d]^{f_1}\\D'_0  & D' \ar[l]_{d'} \ar[r]^{c'} & D'_1}
\end{align*} 
with $(D',d',c')\in \M$, the square
\begin{align*}
\xymatrix{D(d,c)\ar[r]^-{p_D}\ar[d]_{f\times_{f_0} f\times_{f_1} f} & D\ar[d]^{f}\\
D'(d',c') \ar[r]^-{p'_{D'}} & D'}
\end{align*}
is a commutative square. 
In particular, for every object $Z$ in $\C$ and morphisms $u,v,x,y,z\colon Z\to D$ with $du=dv$, $cv=cx$, $dx=dy$, and $cy=cz$, the associative law holds:
\begin{align*}
p_D\langle u,v,p_D\langle x,y,z\rangle\rangle &= p_D\langle p_D\langle u,v,x\rangle, y,z \rangle.
\end{align*}
    
\end{enumerate}

\end{theorem}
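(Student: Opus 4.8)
The plan is to prove the six statements equivalent by first establishing the core sub-cycle $(6)\Rightarrow(4)\Rightarrow(5)\Rightarrow(6)$ among the span-level conditions, and then linking this cluster to the functorial conditions $(1)$--$(3)$. For $(6)\Rightarrow(4)$ I would simply exhibit the required kite morphism explicitly: given the data of diagram \eqref{diag: kite main thm}, the middle leg is forced to be $\mu=\alpha p_1 e_2 p_2=\gamma p_2 e_1 p_1$ (the two descriptions agree by the hypothesis $\alpha p_1(e_2p_2)=\gamma p_2(e_1p_1)$), and one checks that $m=p_D\langle \alpha p_1,\ \mu,\ \gamma p_2\rangle$ is well defined because the four remaining identities of $(4)(c)$ are exactly what make $\langle \alpha p_1,\mu,\gamma p_2\rangle$ a legitimate zig-zag triple with the correct $d$- and $c$-values; the boundary equations $me_1=\alpha$ and $me_2=\gamma$ then drop out of the Mal'tsev identities $p_D\langle x,y,y\rangle=x$ and $p_D\langle y,y,z\rangle=z$ after substituting $p_1e_1=1_A$ and $p_2e_2=1_C$. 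The implication $(4)\Rightarrow(5)$ is a specialisation: diagram \eqref{diag: dikite (d,c)} is the instance of \eqref{diag: kite main thm} in which $E$ is the local product $A\times_B C$, the legs $p_1,p_2$ are its projections $\pi_1,\pi_2$, and $e_1=\langle 1_A, sf\rangle$, $e_2=\langle rg,1_C\rangle$; one verifies that the hypotheses of $(5)$ translate into those of $(4)$, the joint monicity being automatic for pullback projections and the membership $(A\times_B C,\pi_1,\pi_2)\in\M$ being guaranteed by the standing assumption that $\M$ contains the span part of every local product.

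The reverse step $(5)\Rightarrow(6)$ is the heart of the sub-cycle. To recover $p_D$ I would apply $(5)$ to the instance $A=D\times_{D_0}D$, $C=D\times_{D_1}D$, $B=D$, with $f,g$ the projections onto the common middle copy of $D$, sections $r=s=\langle 1_D,1_D\rangle$, and $\alpha,\gamma$ the outer projections, $\beta=1_D$; here $A\times_B C$ is precisely $D(d,c)$, and the morphism $m$ produced by $(5)$ is the sought operation $p_D$, its four defining equations yielding the boundary conditions $dp_D\langle x,y,z\rangle=dz$, $cp_D\langle x,y,z\rangle=cx$ together with the two Mal'tsev identities. The additional clauses of $(6)$ --- the naturality square and the associative law --- I would obtain not by direct computation but from the \emph{uniqueness} part of $(5)$: both $fp_D$ and $p'_{D'}\circ(f\times_{f_0}f\times_{f_1}f)$, and likewise both sides of the associativity equation, are solutions of one and the same instance of the universal problem in $(5)$, hence coincide.

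To close the loop I would connect this cluster to the functorial statements. The equivalence $(1)\Leftrightarrow(2)$ is essentially a reformulation of definitions: a section of $\Grpd(\C,\M)\to\RG(\C,\M)$ is exactly a natural choice of internal groupoid structure on each reflexive graph whose span lies in $\M$, and the uniqueness built into the Lawvere condition matches the determinacy forced by joint monicity. The implication $(2)\Rightarrow(3)$ is immediate from the inclusion $\Grpd(\C,\M)\hookrightarrow\Cat(\C,\M)$, while for $(3)\Rightarrow(6)$ and $(6)\Rightarrow(2)$ I would use the kernel pair construction to pass between a general span in $\M$ and the reflexive graph it generates on $D(d,c)$ --- this is legitimate precisely because $\M$ is closed under that construction --- reading the composition and inverse of the groupoid off $p_D$ and conversely extracting $p_D$ from the composition, the unit laws coming from reflexivity and associativity from the associative law of $(6)$.

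The step I expect to be the main obstacle is making the construction in $(5)\Rightarrow(6)$ and in the bridge $(3)\Leftrightarrow(6)$ rigorous under the weak hypothesis that $\C$ has only pullbacks of split epimorphisms along split epimorphisms. The objects $D\times_{D_0}D$ and $D\times_{D_1}D$ used to present $D(d,c)$ are pullbacks along the legs $d,c$, which are not assumed to be split epimorphisms, so they need not exist as genuine limits; the whole argument must therefore be carried out with generalized elements $x,y,z\colon Z\to D$ and the representable functor of zig-zag triples, invoking the closure of $\M$ under the kernel pair construction in place of actual limits. Keeping the universal properties valid in this element-wise setting --- and in particular the uniqueness arguments that deliver naturality and associativity, while ensuring that every pullback one does form is genuinely of split epimorphism along split epimorphism --- is where the real care is needed; by contrast the Mal'tsev boundary identities fall out formally from reflexivity and joint monicity.
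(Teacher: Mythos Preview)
Your overall architecture differs from the paper's: the paper runs a single chain
\[
(1)\Rightarrow(2)\Rightarrow(3)\Rightarrow[\text{MG section}]\Rightarrow(4)\Rightarrow(5)\Rightarrow(6)\Rightarrow(1),
\]
inserting an auxiliary condition ``the forgetful functor $\MG(\C,\M)\to\RG(\C,\M)$ has a section'' between $(3)$ and $(4)$, whereas you build a sub-cycle $(6)\Rightarrow(4)\Rightarrow(5)\Rightarrow(6)$ and bridge to it. Your $(4)\Rightarrow(5)$ and $(5)\Rightarrow(6)$ match the paper's; the difficulties lie elsewhere.

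\textbf{First gap: uniqueness in $(6)\Rightarrow(4)$.} Your formula $m=p_D\langle\alpha p_1,\mu,\gamma p_2\rangle$ gives existence cleanly, but you never address uniqueness, and that is the hard half. The obvious attempt---decompose $1_E=p_E\langle e_2p_2,\,e_2p_2e_1p_1,\,e_1p_1\rangle$ using joint monicity, then push $m'$ through via naturality---fails because any candidate $m'$ induces a span morphism from $(E,p_1,p_2)$ to $(D,c,d)$ with the legs of $D$ \emph{swapped} (since $dm'=d\gamma p_2$ and $cm'=c\alpha p_1$), and nothing guarantees $(D,c,d)\in\M$. The paper's analogue of this step is the technical heart of the proof and proceeds differently: it uses the multiplicative-graph section to put a multiplication $\mu_E$ on the reflexive graph $E(p_1,p_2)\rightrightarrows E$ (whose span part lies in $\M$ by closure), exhibits $1_E=\midop_E\circ\mu_E\circ\delta$ for an explicit $\delta$, and then invokes \emph{functoriality of the section} applied to the reflexive-graph morphism $(m^3,m)$ to conclude. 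You would need either a comparable two-level argument or an additional symmetry hypothesis on $\M$.

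\textbf{Second gap: $(2)\Rightarrow(1)$ is not definitional.} A section of $\Grpd(\C,\M)\to\RG(\C,\M)$ picks \emph{one} groupoid structure per reflexive graph; it does not by itself exclude others, so the forgetful functor need not be an isomorphism. The paper obtains $(2)\Rightarrow(1)$ only after traversing the whole cycle back through $(6)$.

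Finally, the obstacle you anticipate is not one: in this paper a \emph{span} is by definition a pair of morphisms each of which admits a kernel pair (Definition~\ref{def: basics}(1)), so $D(d)$, $D(c)$ and hence $D(d,c)$ exist as honest pullbacks for every $(D,d,c)\in\M$. No generalized-element workaround is needed there.
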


By taking, respectively, as $\M$ the class of strong relations, relations and all spans, we get the notions of weakly Mal'tsev, Mal'tsev and naturally Mal'tsev categories. Let us recall:

\begin{definition} Let $\C$ be any  category.
\begin{enumerate}
\item \emph{Weakly Mal'tsev} \cite{MF3}\\ The category $\C$ is called \emph{weakly Mal'tsev} when it has all pullbacks of split epimorphisms along split epimorphisms, and in every local product diagram
\begin{equation}\label{local product 0.1}
\vcenter{\xymatrix{
A \ar@<-.5ex>[r]_-{e_1} & E \ar@<-.5ex>[l]_-{p_1} \ar@<.5ex>[r]^-{p_2} & C \ar@<.5ex>[l]^-{e_2}
}}
\end{equation}
the cospan $(e_1, e_2)$ is jointly epic.

\item \emph{Mal'tsev} \cite{CLP1991,Carboni1993,Fay,Mal}:\\ The category $\C$ is called \emph{Mal'tsev} when every internal reflexive relation is a tolerance relation.

 \item \emph{Naturally Mal'tsev} \cite{PTJ}:\\ The category $\C$ is called \emph{naturally Mal'tsev} if it has  binary products and every object $X$ of $\C$ is equipped with a natural Mal'tsev operation, that is, a (canonical) morphism
\[
p_X : X \times X \times X \to X
\]
such that for every morphism $f : X \to Y$ in $\C$, the square
\[
\begin{tikzcd}
X \times X \times X \arrow[r, "p_X"] \arrow[d, "f \times f \times f"'] & X \arrow[d, "f"] \\
Y \times Y \times Y \arrow[r, "p_Y"'] & Y
\end{tikzcd}
\]
commutes, and the identities
\[
p_X\langle x, y, y\rangle = x, \qquad p_X\langle y, y, z\rangle = z
\]
hold for all parallel morphisms $x, y, z : S \to X$ from any object $S$ to $X$.
\end{enumerate}
\end{definition}

A \emph{tolerance} is a relation that is both reflexive and symmetric~\cite{tol,Sossinsky1986}. There exist several equivalent ways to define a Mal'tsev category (see e.g. \cite{Bournetall}); we adopt this particular formulation because it appears to be the most suitable for our purposes, as it applies to any category. As shown in~\cite{PTJ}, within the context of finite limits, the Lawvere condition holds if and only if the category is naturally Mal'tsev.

So that there is no ambiguity, by the \emph{Lawvere condition} we mean precisely that the forgetful functor from internal groupoids to reflexive graphs is an isomorphism of categories. By the \emph{relative Lawvere condition}, with respect to a given class $\M$ of spans in a category $\C$, we mean that the forgetful functor from internal groupoids in $\C$ whose span part is in $\M$, into the category of reflexive graphs whose span part is also in $\M$, is an isomorphism of categories, which may be  represented as
$$\xymatrix{\Grpd(\C,\M)\ar[r]^{\cong} &\RG(\C,\M)}.$$ This clarification will also guide the interpretation of related conditions appearing later on. 

To streamline the presentation of items~(4) and~(5) in Theorem \ref{thm Main}, we now introduce two auxiliary definitions (see also Section~\ref{sec: 2}).

\begin{definition}\label{def: simple} Let $\C$ be any category.
\begin{enumerate}
\item \emph{Compatible spans}:\\
A span $(E,p_1,p_2)$ is said to be \emph{compatible} with a span $(D,d,c)$ if for every $\alpha$, $\gamma$, $e_1$, $e_2$, as in the diagram 
\begin{equation}\label{diag: kite main thm (def) simple}
\vcenter{\xymatrix@!0@=4em{
& E 
\ar@<-.5ex>[dl]_-{p_1}
\ar@<.5ex>[dr]^-{p_2}
\\
A \ar@<-.5ex>[ru]_-{e_1} \ar[rd]_-{\alpha} 
&  & 
C \ar@<.5ex>[lu]^-{e_2} \ar[ld]^-{\gamma} \\
& D \ar[dl]_{d} \ar[rd]^{c} \\
D_0 && D_1
}}
\end{equation}
for which the following identities hold:
    \begin{align*}
    & p_1 e_1 = 1_A, \quad p_2 e_2 = 1_C, \\
    & (e_1 p_1)(e_2 p_2) = (e_2 p_2)(e_1 p_1), \\
    & \alpha p_1 (e_2 p_2) = \gamma p_2 (e_1 p_1), \\
    & d \alpha p_1 = d \alpha p_1 (e_2 p_2), \quad \\ & c \gamma p_2 = c \gamma p_2 ( e_1 p_1),
    \end{align*}
 there exists a unique morphism \( m \colon E \to D \) such that:
\begin{align}
m e_1 = \alpha, \quad
m e_2 = \gamma, \quad
dm =d\gamma p_2, \quad cm =c\alpha p_1.    \end{align}
When a span $(E,p_1,p_2)$ is compatible with all spans in a class of spans $\M$ we say that the span $(E,p_1,p_2)$ is $\M$-compatible.

\item \emph{Dikite diagrams and their admissibility}:\\
A diagram in \( \C \) of the form
\begin{equation}\label{diag: dikite (d,c) in def simple}
\vcenter{\xymatrix@!0@=4em{
A \ar@<.5ex>[r]^-{f} \ar[rd]_-{\alpha} 
& B \ar[d]^-{\beta}
\ar@<.5ex>[l]^-{r}
\ar@<-.5ex>[r]_-{s} & 
C \ar@<-.5ex>[l]_-{g} \ar[ld]^-{\gamma} \\
& D \ar[dl]_{d} \ar[rd]^{c} \\
D_0 && D_1
}}
\end{equation}
with
\begin{align}
& fr = 1_B = gs, \\
& \alpha r = \beta = \gamma s, \\
& d\alpha = d\beta f, \\
& c\beta g = c\gamma,
\end{align}
is called a \emph{dikite diagram} provided that the pullback of $f$ and $g$ exists. Such a diagram is said to be \emph{admissible} if there exists a unique morphism
\[
m \colon A \times_B C \to D
\]
making the following equalities hold:
\begin{align*}
& m \langle 1_A, sf \rangle = \alpha, \\
& m \langle rg, 1_C \rangle = \gamma, \\
& d m = d \gamma \pi_2, \\
& c m = c \alpha \pi_1.
\end{align*}
\end{enumerate}
\end{definition}

In this light, condition~(4) of Theorem \ref{thm Main} can be restated as the assertion that every jointly monic span \((E, p_1, p_2) \in \M\) is \(\M\)-compatible. Furthermore, condition~(5) states that every dikite diagram whose direction span lies in~\(\M\) is admissible. Finally, condition~(6) establishes that the forgetful functor from (associative) pregroupoids~\cite{K1,K2,Johnstone1991} whose span part lies in~\(\M\) to the category of spans belonging to~\(\M\) is an isomorphism of categories.

Each one of the three classes of spans mentioned above contains local products and is closed under the kernel pair construction (see Definition~\ref{def: basics}). Consequently, they all satisfy the hypotheses of Theorem \ref{thm Main}.

The following three corollaries illustrate the significance of our main theorem in unifying the classes of weakly Mal’tsev, Mal’tsev, and naturally Mal’tsev categories.

Note that, rather than requiring the category $\C$ to have all kernel pairs, we restrict our analysis to spans whose legs admit kernel pairs, as further explained in Definition~\ref{def: basics}, item~(1).

The case where $\M$ is the class of all strong relations (jointly strongly monic spans---whose legs admit kernel pairs):

\begin{corollary}\label{cor:weakly Mal'tsev}
Let \( \C \) be a category with pullbacks of split epimorphisms along split epimorphisms and equalizers. Then the following conditions are equivalent:
\begin{enumerate}
\item The category $\C$ is weakly Mal'tsev;
    \item Every reflexive strong relation is an equivalence relation;
        \item Every reflexive strong relation is a preorder;

\item Every strong relation in $\C$ is compatible with the class of all strong relations;

\item Every dikite diagram whose direction is a strong relation is admissible;

  \item Every strong relation is difunctional.

\end{enumerate}

\end{corollary}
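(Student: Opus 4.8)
The plan is to derive Corollary~\ref{cor:weakly Mal'tsev} from Theorem~\ref{thm Main} by specializing to the class $\M$ of all strong relations and then translating each abstract condition into the classical vocabulary of Mal'tsev category theory.
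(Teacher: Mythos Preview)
Your plan correctly handles the equivalence of conditions (2)--(6): these are indeed obtained by specializing Theorem~\ref{thm Main} to the class $\M$ of strong relations and reading off the translations (groupoid structure on a relation $=$ equivalence relation, category structure $=$ preorder, pregroupoid structure $=$ difunctionality, etc.). You should also note explicitly that the hypotheses of Theorem~\ref{thm Main} are satisfied, i.e.\ that strong relations are closed under the kernel pair construction and contain the span part of every local product; the paper asserts this just before the corollaries.

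The genuine gap is condition~(1). The property ``$\C$ is weakly Mal'tsev'' --- that in every local product the induced cospan $(e_1,e_2)$ is jointly epic --- is \emph{not} a specialization of any condition in Theorem~\ref{thm Main}; it is a statement about cospans, not about spans in $\M$. Linking it to the other conditions requires a separate argument, and this is precisely where the hypothesis of equalizers is used. The paper proves it as follows: if $\C$ is weakly Mal'tsev, then admissibility of dikites with strongly monic direction is immediate from the orthogonality between jointly epic cospans and jointly strongly monic spans. Conversely, given $u,v\colon E\to Z$ with $ue_i=ve_i$, one forms the equalizer $k\colon K\to E$, observes that the span $(K,p_2k,p_1k)$ is a strong relation, and uses admissibility of the resulting dikite to produce a retraction of $k$, forcing $u=v$. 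Your proposal omits this step entirely, and without it condition~(1) is not connected to the rest.
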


The case where $\M$ is the class of all relations (jointly monic spans---whose legs admit kernel pairs):

\begin{corollary}\label{cor:Mal'tsev}
Let \( \C \) be a category with pullbacks of split epimorphisms along split epimorphisms. Then the following conditions are equivalent:
\begin{enumerate}
\item The category $\C$ is Mal'tsev;
    \item Every reflexive relation is an equivalence relation;
        \item Every reflexive relation is a preorder;
\item Every relation is compatible with the class of all relations;
\item Every dikite diagram whose direction is a relation is admissible;
  \item Every relation is difunctional.

\end{enumerate}

\end{corollary}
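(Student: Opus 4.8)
The plan is to derive Corollary~\ref{cor:Mal'tsev} by specializing Theorem~\ref{thm Main} to the case where $\M$ is the class of all relations, and then reconciling the resulting relative Lawvere condition with the classical relational characterizations of Mal'tsev categories. First I would record that the hypotheses of the main theorem are met: as observed in the text preceding the corollary, the class of jointly monic spans whose legs admit kernel pairs contains the span part of every local product and is closed under the kernel pair construction, so Theorem~\ref{thm Main} applies verbatim. With this choice of $\M$, conditions (4) and (5) of the corollary are \emph{literally} conditions (4) and (5) of Theorem~\ref{thm Main} read off through the restatements in Definition~\ref{def: simple}: condition (4) asserts that every jointly monic span in $\M$ is $\M$-compatible, and every relation is by definition such a span; condition (5) asserts that every dikite diagram whose direction lies in $\M$ is admissible. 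Hence (4) and (5) are already equivalent to one another and to the relative Lawvere condition $\Grpd(\C,\M)\cong\RG(\C,\M)$, with no additional argument required.

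The core of the corollary is to identify the relative Lawvere condition, for $\M$ the class of relations, with condition (2). Here I would use three observations. A reflexive graph whose span part is jointly monic is precisely a reflexive relation, so $\RG(\C,\M)$ is the category of reflexive relations. Dually, an internal groupoid whose span part is a relation is exactly an internal equivalence relation, so $\Grpd(\C,\M)$ is the category of equivalence relations. Finally, because the span of a relation is jointly monic, any compatible groupoid (or category) structure is automatically unique and any reflexive-graph morphism between such structures automatically preserves it; thus the forgetful functor is automatically fully faithful, and the relative Lawvere condition collapses to the statement that it is bijective on objects, namely that every reflexive relation carries a (unique) equivalence relation structure. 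This is exactly condition (2). Running the same bookkeeping with $\Cat(\C,\M)$ in place of $\Grpd(\C,\M)$ identifies internal categories on relations with internal preorders, so that item (3) of Theorem~\ref{thm Main} translates into condition (3) of the corollary.

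It then remains to splice conditions (1) and (6) into the chain. That $\C$ is Mal'tsev, in the sense that every reflexive relation is a tolerance, is immediate from (2), since an equivalence relation is in particular reflexive and symmetric; the reverse implication (1)$\Rightarrow$(2), as well as (3)$\Rightarrow$(2), rests on the classical fact that in a Mal'tsev category a reflexive relation which is symmetric, or which is transitive, is automatically an equivalence relation, which I would invoke from the standard references rather than reprove (see the survey \cite{Bournetall}). The equivalence of (6) with the Mal'tsev condition is the well-known difunctionality characterization, to be quoted from the same sources.

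The main obstacle is not computational but conceptual: one must argue carefully that the acquisition of symmetry---the passage from a reflexive, or transitive, relation to a full equivalence relation---is precisely what the relative Lawvere condition supplies, so that the apparatus of Theorem~\ref{thm Main} genuinely reproduces, and does not merely resemble, the classical Mal'tsev equivalences. Concretely, the delicate point is verifying that an internal groupoid whose span part is jointly monic is forced to be a symmetric relation, so that $\Grpd(\C,\M)$ really is the category of equivalence relations and not merely of preorders; once this identification is secured, the remaining implications are either direct instances of the main theorem or citations of the classical theory.
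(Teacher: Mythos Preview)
Your proposal is correct and, in fact, more detailed than the paper's own treatment: the paper explicitly \emph{omits} the proof of this corollary, stating only that ``the arguments proceed in close analogy with the standard proofs; see, for example, \cite{Bournetall} and the references therein.'' Your outline---specializing Theorem~\ref{thm Main} to $\M=\{\text{relations}\}$, identifying $\RG(\C,\M)$ with reflexive relations, $\Grpd(\C,\M)$ with equivalence relations, $\Cat(\C,\M)$ with preorders, and then citing the classical literature for (1) and (6)---is exactly the intended route and matches the paper's (implicit) approach.

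One small remark: you do not need to invoke a separate classical fact for $(3)\Rightarrow(2)$, since both are already equivalent via Theorem~\ref{thm Main} once you have made the identifications above. You might also note that corollary condition~(6) (difunctionality) is precisely theorem condition~(6) (unique natural pregroupoid structure) read off for jointly monic spans: difunctionality of a relation $(D,d,c)$ is exactly the existence of $p_D\colon D(d,c)\to D$ with the required domain/codomain identities, and the Mal'tsev and naturality conditions are automatic by joint monicity. This lets you obtain (6) directly from the main theorem rather than from a separate citation, though either path is acceptable.
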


The case where $\M$ is the class of all spans (whose legs admit kernel pairs):

\begin{corollary}\label{cor:natMal'tsev}
Let \( \C \) be a category with pullbacks of split epimorphisms along split epimorphisms and  finite products. Then the following conditions are equivalent:
\begin{enumerate}
\item The category $\C$ is naturally Mal'tsev;
\item The forgetful functor $\Grpd(\C)\to\RG(\C)$ has a section;
\item The forgetful functor $\Cat(\C)\to\RG(\C)$ has a section;

\item Every relation is compatible with the class of all spans;
\item Every dikite diagram is admissible;
  \item Every span has a canonical pregroupoid structure.

\end{enumerate}

\end{corollary}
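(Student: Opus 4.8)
The plan is to derive Corollary \ref{cor:natMal'tsev} as the instance of Theorem \ref{thm Main} obtained by letting $\M$ be the class of all spans (whose legs admit kernel pairs). My first task is to verify that this $\M$ genuinely satisfies the hypotheses of the main theorem: it must contain the span part of every local product and be closed under the kernel pair construction. Since $\M$ here is literally \emph{all} spans, both closure properties are automatic, so the instantiation is legitimate and items (1)--(6) of Theorem \ref{thm Main} become equivalent verbatim. This immediately yields the equivalence of items (2), (3), and (6) of the corollary, since these are exactly items (2), (3), and (6) of the theorem read off with $\M$ the class of all spans --- noting that $\Grpd(\C,\M)=\Grpd(\C)$ and $\RG(\C,\M)=\RG(\C)$ when $\M$ is everything, so the relative notation collapses to the absolute one.

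Next I would handle the two places where the corollary's wording differs cosmetically from the theorem's. For item (4), the corollary says ``every relation is compatible with the class of all spans.'' Here I would invoke condition (4) of the theorem, which asserts that every jointly monic span in $\M$ is $\M$-compatible; since $\M$ is all spans, the jointly monic members of $\M$ are exactly the relations, and $\M$-compatibility is compatibility with all spans, so the two statements coincide once I unwind Definition \ref{def: simple}(1). For item (5), the theorem's condition (5) is that every dikite diagram whose direction span lies in $\M$ is admissible; with $\M$ the class of all spans this is exactly ``every dikite diagram is admissible,'' using Definition \ref{def: simple}(2). Thus items (4) and (5) of the corollary are direct translations of (4) and (5) of the theorem.

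The remaining work is to connect item (1), that $\C$ is naturally Mal'tsev, to the relative Lawvere condition, which is item (1) of Theorem \ref{thm Main} with $\M$ all spans. Here I would appeal to the characterization recalled in the discussion following the definitions, namely that within a context with finite products the Lawvere condition with respect to all spans is precisely the existence of a natural Mal'tsev operation $p_X\colon X\times X\times X\to X$ on every object, as established in \cite{PTJ}. Since the corollary assumes $\C$ has finite products (in addition to pullbacks of split epis along split epis), I can run this equivalence: the canonical pregroupoid structure of item (6), when the span $(D,d,c)$ is taken to be the terminal diagram $X\times X\leftarrow X\times X\times X\to X\times X$ arising from $X$, furnishes precisely the operation $p_X$, and the naturality square together with the two identities $p_X\langle x,y,y\rangle=x$, $p_X\langle y,y,z\rangle=z$ are the finite-product specializations of the conditions in item (6) of the theorem.

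The main obstacle I anticipate is not any deep argument but rather the careful bookkeeping at the interface between the absolute and relative formulations: one must check that restricting to spans ``whose legs admit kernel pairs'' is harmless when finite products are present, and that the object-level operation $p_X$ extracted from the span-level morphism $p_D$ of item (6) really is natural in the sense demanded by Definition of naturally Mal'tsev, i.e.\ that the commuting square for an arbitrary $f\colon X\to Y$ is a consequence of the functoriality square in item (6) applied to the induced map of product spans. Once that translation is pinned down, the equivalence of (1) with the rest follows, and the corollary is complete.
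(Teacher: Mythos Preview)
Your overall strategy---instantiate Theorem~\ref{thm Main} with $\M$ the class of all spans, observe that items (2)--(6) of the corollary are literal translations of the theorem's (2)--(6), and then separately tie item (1) to the rest---is exactly the paper's route. The only place where your sketch diverges is in the treatment of item (1). You propose to invoke~\cite{PTJ} for the equivalence between the naturally Mal'tsev condition and the Lawvere condition, but that reference works under full finite limits, whereas the corollary's hypotheses are weaker (only pullbacks of split epis along split epis, plus finite products). The paper instead argues directly: from (1) to the rest it defines the pregroupoid structure on an arbitrary span $(D,d,c)$ as $p = p_D\langle \dom,\midop,\cod\rangle$ using the given natural Mal'tsev operation on $D$; and for the converse it applies admissibility of the dikite
\[
\vcenter{\xymatrix@!0@=4em{
X\times X \ar@<.5ex>[r]^-{\pi_2} \ar[rd]_-{\pi_1} & X
\ar@<.5ex>[l]^-{\Delta}
\ar@<-.5ex>[r]_-{\Delta}
\ar@{=}[d] & X\times X \ar@<-.5ex>[l]_-{\pi_1} \ar[ld]^-{\pi_2} \\
& X \ar[dl] \ar[rd] \\
1 && 1
}}
\]
(whose direction span is $(X,!,!)$) to produce $p_X\colon X^3\to X$, and then uses admissibility of the analogous dikite with $\alpha=f\pi_1$, $\gamma=f\pi_2$ landing in $Y$ to force naturality. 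Your description of the relevant span as ``$X\times X\leftarrow X\times X\times X\to X\times X$'' is not the one the paper uses and is not quite right for extracting $p_X$; the correct direction span is the terminal one $(X,!,!)$, whose kernel pair construction yields $X^3$.
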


At this point, it is clear that conditions~(4) and~(5) in Corollary~\ref{cor:natMal'tsev} assert that every local product is also a local coproduct (see \cite{Bourn1991}). 
Similarly, in Corollary~\ref{cor:Mal'tsev}, these conditions can be interpreted---now assuming that all pullbacks exist---as stating that the cospan $(e_1, e_2)$ arising from every local product is jointly extremally epimorphic, and that every comparison morphism in a commutative split square is a strong epimorphism (see, for example, \cite{Bournetall}).

These observations are further extended in Section~\ref{sec: extending main thm}, where they are connected with commutators and Janelidze–Pedicchio pseudogroupoids~\cite{JP,JP2,Janelidze2016}.

\section{Basic definitions of internal categorical structures}\label{sec: 2}

In the spirit of Mac\,Lane’s reflections on the “health of mathematics” \cite{MacLane1983}, the pursuit of abstraction within category theory remains vital precisely when it ventures beyond the comfort of well-behaved environments. The investigation of internal categorical structures in categories lacking all finite limits embodies this spirit, seeking to preserve conceptual coherence while relaxing structural assumptions. 

With this motivation in mind, we recall the basic definitions of internal categorical structures, since in our setting we do not assume that all limits exist. For instance, rather than working in categories where kernel pairs are guaranteed, we restrict attention to those spans whose legs possess kernel pairs. In certain contexts, we shall also require the existence of a corresponding box construction for a span; whenever this is the case, it will be explicitly indicated. 

Most of the notions involved are standard and were already well established by the 1980s. By that time, it was clear that they could be formulated in a general categorical setting; however, most results continued to be stated under the assumption of finite limits. Whenever appropriate, a reference is provided for each basic concept being defined. For a general overview with historical notes, see~\cite{Janelidze2016}.

\begin{definition}\label{def: basics} Let \(\C\) be a category with pullbacks of split epimorphisms along split epimorphisms.
\begin{enumerate}
\item  \emph{Span} \cite{Benabou1973,Kell,K2,MacLane1998,Yoneda}:\\
A diagram in $\C$ of the form
\[
\xymatrix{
& D \ar[ld]_d \ar[rd]^c \\
D_0 && D_1
}
\]
is called a \emph{span} when both morphisms $d$ and $c$ admit kernel pairs, that is, when there exist canonical pullback squares:
\[
\xymatrix{
& D(d)\ar[dl]_{d_1} \ar[rd]^{d_2} & & D(c) \ar[dl]_{c_1} \ar[rd]^{c_2}\\
D\ar[rd]^{d} & & D \ar[ld]_d \ar[rd]^c  && D\ar[ld]_{c}\\
& D_0 && D_1.
}
\]

If needed, a span is represented as a tuple $(D,d,c)$. Note that this is not the standard definition of a span, but it becomes the standard one as soon as the category $\C$ has kernel pairs.

\item \emph{The box construction} \cite{JP,JP2}:\\
In some occasions we will need the \emph{box construction} for a given span. A span $(D,d,c)$ admits the \emph{box construction} when there exists an object, denoted by $D(\square)$, together with four projection morphisms $$q_1,q_2,q_3,q_4\colon{D(\square)\to D},$$
that constitute a limiting cone in the following diagram
\[
\xymatrix{
D(d) \ar@<0ex>[r]^-{d_2} \ar@<0ex>[d]_-{d_1}
& D & D(c) \ar[l]_{c_1}\ar[d]^{c_2} 
 \\
D & D(\square)
 \ar@<0ex>[l]^-{q_1} \ar@<-.5ex>[u]_-{q_2} \ar[r]^{q_3} \ar[d]_{q_4}
& D \\
D(c) \ar[u]^{c_2}\ar[r]^{c_1}
& D & D(d) \ar[u]_{d_1}\ar[l]_{d_2}.
}
\]

Intuitively, in terms of generalized elements, we may think of  $D(\square)$ as consisting of tuples $\langle x,y,z,w\rangle$ of parallel morphisms from some object $Z$ into $D$ with $dx=dy$, $cy=cz$, $dz=dw$ and $cw=cx$  as illustrated in the case were we can take $Z=1$ and identify $x,y,z,w$ with elements in $D$ that may be seen as arrows:
\[
\xymatrix{\cdot \ar[r]^{y}\ar[d]_{x}& \cdot \\ \cdot & \cdot \ar@{-->}[l]_{w} \ar[u]_{z}}
\]
The reason why $w$ is dashed is because it will have a different treatment than the triple $\langle x,y,z\rangle$ to which we will refer to as the kernel pair construction. Moreover, we will also need the requirement that the canonical projection $\langle x,y,z,w\rangle\mapsto \langle x,y,z\rangle$ admits a kernel pair, since this construction will be needed later on when defining internal pseudogroupoid \cite{JP,JP2,Janelidze2016}.

\item \emph{The kernel pair construction} \cite{Johnstone1991}:\\ 
For a given span \((D, d, c)\), since the kernel pairs of \(d\) and \(c\) exist,  we may from the  \emph{kernel pair construction} by taking pullbacks as illustrated:
\[
\xymatrix@!0@=4em{
D(d,c) \ar@<.5ex>[r]^-{p_2} \ar@<-.5ex>[d]_-{p_1}
& D(c) \ar@<.5ex>[l]^-{e_2} \ar@<-.5ex>[d]_-{c_1} \ar@<0ex>[r]^{c_2}
& D \ar[d]_{c}
 \\
D(d)
 \ar@<.5ex>[r]^-{d_2} \ar@<-.5ex>[u]_-{e_1} \ar[d]_{d_1}
& D
 \ar@<.5ex>[l]^-{\Delta} \ar@<-.5ex>[u]_-{\Delta} \ar[r]^{c} \ar[d]_{d}
& D_1 \\
D \ar[r]^{d}
& D_0.
}
\]

In the event that the category $\C$ has all pullbacks and a terminal object (and hence all finite limits), the kernel pair construction is nothing but the familiar pullback of $\langle d,c\rangle$ along $d\times c$ as illustrated
\begin{align*}
\xymatrix{D(d,c)\ar[r]^-{\langle\dom,\cod\rangle}\ar[d]_{\midop} & D\times D\ar[d]^{d\times c}\\D\ar[r]^-{\langle d,c\rangle}& D_0\times D_1}
\end{align*}
in which case $\midop$ is obtained either by $d_2p_1$ or by $c_1p_2$, and there is an induced span \((D(d,c), \dom, \cod)\) with   \(\dom = d_1 p_1\) and \(\cod = c_2 p_2\). Note that $\dom$ and $\cod$ are both split by the diagonal morphism $\Delta\colon{D\to D(d,c)}$ and hence admit kernel pairs.

Note also that the box construction is always possible in the presence of finite limits by observing that the square
\[
\xymatrix{D(\square) \ar[d]_{q_4}\ar[r]^{\langle q_1,q_2,q_3\rangle} & D(d,c) \ar[d]^{\langle d\cod,c\dom \rangle} \\ D\ar[r]^-{\langle d,c\rangle}& D_0\times D_1}
\]
is a pullback square.

\item \emph{Closure under the kernel pair construction}:\\
We will say that a class of spans $\M$ is closed under the kernel pair construction when for every span $(D,d,c)\in \M$, the induced span $(D(d,c),\dom,\cod)$ also is in $\M$.

\item \emph{Split span}:\\
A diagram in $\C$ of the form
\begin{equation}\label{split span}
\vcenter{\xymatrix{
A \ar@<-.5ex>[r]_-{e_1} & E \ar@<-.5ex>[l]_-{p_1} \ar@<.5ex>[r]^-{p_2} & C \ar@<.5ex>[l]^-{e_2}
}}
\end{equation}
is called a split span when $p_1e_1=1_A$ and $p_2e_2=1_C$. If needed it will be referred to as a tuple $(E,e_1,p_1,e_2,p_2)$.

\item \emph{Commutative split span}:\\
A split span $(E,e_1,p_1,e_2,p_2)$ is called a \emph{commutative split span} when $$e_1p_1e_2p_2=e_2p_2e_1p_1.$$

\item \emph{Commutative split square}:\\
A diagram in $\C$ of the form
\begin{equation}\label{split square}
\xymatrix@!0@=4em{
E \ar@<.5ex>[r]^-{p_2} \ar@<-.5ex>[d]_-{p_1} & C \ar@<.5ex>[l]^-{e_2} \ar@<-.5ex>[d]_-{g} \\
A \ar@<.5ex>[r]^-{f} \ar@<-.5ex>[u]_-{e_1} & B \ar@<.5ex>[l]^-{r} \ar@<-.5ex>[u]_-{s}.
}
\end{equation}
is called a \emph{commutative split square} when $p_1e_1=1_A$, $p_2e_2=1_C$, $fr=1_B=gs$, $gp_2=fp_1$ and $e_1 r=e_2 s$. 
 
\item \emph{Local product} \cite{MF7}:\\
A diagram in $\C$ of the form
\begin{equation}\label{split span as local prod}
\vcenter{\xymatrix{
A \ar@<-.5ex>[r]_-{e_1} & E \ar@<-.5ex>[l]_-{p_1} \ar@<.5ex>[r]^-{p_2} & C \ar@<.5ex>[l]^-{e_2}
}}
\end{equation}
is called a \emph{local product} when there exists a second diagram
\begin{equation}\label{local span}
\vcenter{\xymatrix@!0@=4em{
A \ar@<.5ex>[r]^-{f} & B \ar@<.5ex>[l]^-{r} \ar@<-.5ex>[r]_-{s} & C \ar@<-.5ex>[l]_-{g}
}}
\end{equation}
with \( f r = 1_B = g s \), such that the span \( (E, p_1, p_2) \) is the pullback of \( g \) along \( f \), and the morphisms 
\( e_1 = \langle 1_A, s f \rangle \) and \( e_2 = \langle r g, 1_C \rangle \) are the canonical morphisms induced by the universal property of the pullback. In particular, it forms a commutative split span $(E,e_1,p_1,e_2,p_2)$. Conversely, if in a commutative split span $(E,e_1,p_1,e_2,p_2)$ the pushout of $p_1$ and $p_2$ exists and the span $(E,p_1,p_2)$ is the pullback of its pushout then it is a local product.

\item  \emph{Reflexive graph}:\\
A diagram in $\C$ of the form 
\[
\xymatrix{
 C_1 \ar@<1ex>[r]^d \ar@<-1ex>[r]_c & C_0 \ar[l]|e
}
\]
is called a \emph{reflexive graph} when $de=1_{C_0}=ce$. It may also be represented as a tuple $(C_1,C_0,d,e,c)$.

\item  \emph{Unital multiplicative graph} \cite{Janelidze2003,JP,JP2}:\\
 A diagram in $\C$ of the form
\[
\xymatrix{
C_2 \ar[r]^m & C_1 \ar@<1ex>[r]^d \ar@<-1ex>[r]_c & C_0 \ar[l]|e
}
\]
is called a \emph{unital multiplicative graph} when the tuple $(C_1,C_0,d,e,c)$ is a reflexive graph, and there exists a (canonical) pair of morphisms $(\pi_1,\pi_2)$ such that the square
 \[
  \xymatrix{
  C_2 \ar[r]^{\pi_2} \ar[d]_{\pi_1} & C_1 \ar[d]^c \\
  C_1 \ar[r]^d & C_0
  }
  \]
is a pullback square (that is $C_2=C_1\times_{C_0} C_1$), and moreover:
\begin{align*}
  d\,m &= d\,\pi_2, \\
  c\,m &= c\,\pi_1, \\
  m\langle 1_{C_1}, ed \rangle &= 1_{C_1} = m\langle ec, 1_{C_1} \rangle.
\end{align*}
In some cases, a unital multiplicative graph may be written as a tuple $(C_1,C_0,d,e,c,m)$, with the understanding that $m$ is the multiplication, that is, a morphism from $C_1\times_{C_0} C_1$ to $C_1$.

\item \emph{Internal category} \cite{MacLane1998}\\
 A diagram in $\C$ of the form
\[
\xymatrix{
C_2 \ar[r]^m & C_1 \ar@<1ex>[r]^d \ar@<-1ex>[r]_c & C_0 \ar[l]|e
}
\]
is called an \emph{internal category} when the tuple $(C_1,C_0,d,e,c,m)$ is a unital multiplicative graph and the multiplication $m\colon{C_2\to C_1}$ is associative, that is,
\[
m(m \times_{C_0} 1_{C_1}) = m(1_{C_1} \times_{C_0} m).
\]

\item \emph{Internal groupoid} \cite{MacLane1998}\\
 A diagram in \(\C\) of the form
\[
\xymatrix{
C_2 \ar[r]^m & C_1 \ar@(ur,ul)_i \ar@<1ex>[r]^d \ar@<-1ex>[r]_c & C_0 \ar[l]|e
}
\]
is called an \emph{internal groupoid} when the tuple $(C_1,C_0,d,e,c,m)$ is an internal category, and moreover the morphism \(i\colon C_1 \to C_1\), called the \emph{inverse}, satisfies:
\begin{align*}
m\langle 1_{C_1}, i \rangle &= ed, \\
m\langle i, 1_{C_1} \rangle &= ec.
\end{align*}
If needed, an internal groupoid such as the one displayed in the diagram above will be referred to as the tuple $(C_1,C_0,d,e,c,m,i)$.

\item \emph{Internal Pregroupoid} \cite{Johnstone1991}:\\
A diagram in $\C$ of the form
\[
\xymatrix{
& D(d,c) \ar[d]^{p}\\
 & D \ar[ld]_d \ar[rd]^c \\
D_0 && D_1
}
\]
is called an internal \emph{pregroupoid} when $(D,d,c)$ is a span, the object $D(d,c)$ is obtained via the kernel pair construction
\[
\xymatrix@!0@=4em{
D(d,c) \ar@<.5ex>[r]^-{p_2} \ar@<-.5ex>[d]_-{p_1}
& D(c) \ar@<.5ex>[l]^-{e_2} \ar@<-.5ex>[d]_-{c_1} \ar@<0ex>[r]^{c_2}
& D \ar[d]_{c}
 \\
D(d)
 \ar@<.5ex>[r]^-{d_2} \ar@<-.5ex>[u]_-{e_1} \ar[d]_{d_1}
& D
 \ar@<.5ex>[l]^-{\Delta} \ar@<-.5ex>[u]_-{\Delta} \ar[r]^{c} \ar[d]_{d}
& D_1 \\
D \ar[r]^{d}
& D_0,
}
\]
and the morphism $p$ satisfies
\begin{align}
p\,e_1 &= d_1, \quad p\,e_2 = c_2, \label{Mal'tsev-conditions} \\
d\,p &= d\,c_2\,p_2, \quad c\,p = c\,d_1\,p_1. \label{Domain-and-Codomain}
\end{align}
Intuitively, \(D(d,c)\) consists of triples of morphisms \(x,y,z\colon Z\to D\) such that \(dx = dy\) and \(cy = cz\); under this interpretation, the four conditions on \(p\) become
\begin{align*}
p\langle x,y,y\rangle &= x, \quad  p\langle y,y,z\rangle = z, \\
dp\langle x,y,z\rangle &= dz, \quad  cp\langle x,y,z\rangle = cx.
\end{align*}
If needed, a pregroupoid may be represented as a tuple $(D,d,c,p)$.

\item \emph{Kock pregroupoid} \cite{K1,K2}:\\
A diagram in $\C$ of the form
\[
\xymatrix{
& D(d,c) \ar[d]^{p}\\
 & D \ar[ld]_d \ar[rd]^c \\
D_0 && D_1
}
\]
is called a \emph{Kock pregroupoid} (or \emph{associative pregroupoid}) when the tuple $(D,d,c,p)$ is an internal pregroupoid, and the morphism $p$ satisfies the following condition: for every object $Z$ in $\C$ and morphisms $u,v,x,y,z\colon Z\to D$ with $du=dv$, $cv=cx$, $dx=dy$, and $cy=cz$, the associative law holds:
\begin{align*}
p\langle u,v,p\langle x,y,z\rangle\rangle &= p\langle p\langle u,v,x\rangle, y,z \rangle.
\end{align*}

\item \emph{Autonomous pregroupoid}:\\
An associative pregroupoid $(D,d,c,p)$ is called \emph{autonomous} when for every object $Z$ in $\C$ and for any given nine parallel morphisms    $ x_1,y_1,z_1, x_2,y_2,z_2,x_3,y_3,z_3\colon{  Z\to D
}$
    with
    \begin{align*}
    dx_1=&dx_2  &  dx_1=&dy_1\\
    &cx_2=cx_3 && cy_1=cz_1\\
    dy_1=&dy_2  &  dx_2=&dy_2\\
    &cy_2=cy_3 && cy_2=cz_2\\
    dz_1=&dz_2  &  dx_3=&dy_3\\
    &cz_2=cz_3 && cy_3=cz_3
    \end{align*}
  as ilustrated
\[\xymatrix{
\cdot && \cdot \\
& \cdot 
\ar[lu]^{x_1} \ar[ru]_{y_1}
\ar[ld]_{x_2} \ar[rd]^{y_2}
&&\cdot \ar[lu]^{z_1} \ar[ld]_{z_2} \\
\cdot && \cdot \\
& \cdot 
\ar[lu]^{x_3} \ar[ru]_{y_3}
&&\cdot \ar[lu]^{z_3}
}\]
   it follows that 
   \begin{align*}
      p\langle p\langle x_1,x_2,x_3\rangle,\;p\langle y_1,y_2,y_3\rangle,\;p\langle z_1,z_2,z_3\rangle\rangle=\\
      =p\langle p\langle x_1,y_1,z_1\rangle,\;p\langle x_2,y_2,z_2\rangle,\;p\langle x_3,y_3,z_3\rangle\rangle.
   \end{align*}

\item \emph{Janelidze-Peddicchio pseudogroupoid} \cite{JP,JP2,Janelidze2016}:\\
A diagram in $\C$ of the form
\[
\xymatrix{
& D(\square) \ar[d]^{m}\\
 & D \ar[ld]_d \ar[rd]^c \\
D_0 && D_1
}
\]
is called an internal \emph{pseudogroupoid} when $(D,d,c)$ is a span that admits the box construction (see above), thus given rise to the object $D(\square)$ and the four projections $q_1,q_2,q_3,q_4\colon{D(\square)\to D}$ 
and moreover, the morphism $m$ is required to satisfy the following condition: for every object $Z$ and every five parallel morphisms $$x,y,z,w,w'\colon{Z\to D}$$ with $dx=dy$, $cy=cz$, $dz=dw$, $cw=cx$ and $dz=dw'$, $cw'=cx$
\begin{align}
m\langle x,y,y,x\rangle &= x, \quad m\langle y,y,z,z\rangle = z, \label{Mal'tsev-conditions} \\
dm\langle x,y,z,w\rangle &= dz, \quad cm\langle x,y,z,w\rangle = cx. \label{Domain-and-Codomain}\\
m\langle x,y,z,w\rangle &= m\langle x,y,z,w'\rangle 
\end{align}
 as illustrated when the object $Z$ can be taken as the terminal object and the generalized elements in $D$ can be pictured as arrows.
\[
\xymatrix{\cdot \ar[r]^{y}\ar[d]_{x}& \cdot \\ \cdot & \cdot \ar@<.5ex>@{-->}[l]^{w'} \ar@<-.5ex>@{-->}[l]_{w} \ar[u]_{z}}
\]
Note that in comparison to \cite{JP,JP2,Janelidze2016} we have chosen to  project the fourth variable $w$ instead of the third one $z$, and we are not considering the associative condition.

\item \emph{Directed kite} \cite{MF17}\\
A diagram in $\C$ of the form
\begin{equation}\label{diag: dikite (d,c) in definition}
\vcenter{\xymatrix@!0@=4em{
A \ar@<.5ex>[r]^-{f} \ar[rd]_-{\alpha} 
& B \ar[d]^-{\beta}
\ar@<.5ex>[l]^-{r}
\ar@<-.5ex>[r]_-{s} & 
C \ar@<-.5ex>[l]_-{g} \ar[ld]^-{\gamma} \\
& D \ar[dl]_{d} \ar[rd]^{c} \\
D_0 && D_1
}}
\end{equation}
is called a \emph{directed kite} (or \emph{dikite diagram}) when the following conditions hold:
\begin{align}
& fr = 1_B = gs, \\
& \alpha r = \beta = \gamma s, \\
& d\alpha = d\beta f, \\
& c\beta g = c\gamma,
\end{align}
and $(D,d,c)$ is a span, called the \emph{direction} of the dikite diagram.  

\item  \emph{Multiplicative dikite} \cite{MF17}\\
A \emph{multiplication} on a dikite such as the one displayed above is a morphism $m\colon A\times_B C \to D$ satisfying:
\begin{align*}
& m\langle 1_A, sf \rangle = \alpha, \\
& m\langle rg, 1_C \rangle = \gamma, \\
& dm = d\gamma \pi_2, \\
& cm = c\alpha \pi_1.
\end{align*}
When such a multiplication exists and is unique, we say that the dikite diagram is \emph{admissible}.

\item 
A diagram in $\C$ of the form
\begin{align*}
\xymatrix{D_0 \ar[d]_{f_0} & D \ar[d]^{f} \ar[l]_{d} \ar[r]^{c} & D_1\ar[d]^{f_1}\\D'_0  & D' \ar[l]_{d'} \ar[r]^{c'} & D'_1}
\end{align*} 
in which $(D,d,c)$ and $(D',d',c')$ are spans is called a \emph{morphism of spans} when it is a commutative diagram.

\item 
A diagram in $\C$ of the form
\begin{align*}
\xymatrix{D_0 \ar[d]_{f_0} & D \ar[d]^{f} \ar[l]_{d} \ar[r]^{c} & D_1\ar[d]^{f_1}\\D'_0  & D' \ar[l]_{d'} \ar[r]^{c'} & D'_1}
\end{align*} 
in which $(D,d,c,p)$ and $(D',d',c',p')$ are pregroupoids is called a \emph{morphism of pregroupoids} when it is a commutative diagram and moreover the induced square
\begin{align*}
\xymatrix{D(d,c)\ar[r]^-{p}\ar[d]_{f\times_{f_0} f\times_{f_1} f} & D\ar[d]^{f}\\
D'(d',c') \ar[r]^-{p'} & D'}
\end{align*}
is a commutative square. 

Morphisms between reflexive graphs, unital multiplicative graphs, internal categories, internal groupoids, Kock pregroupoids, directed kites, and multiplicative dikites are obtained in a similar way.

\end{enumerate}
\end{definition}

The following proposition further illustrates the utility of the notion of directed kite and will play a key role in the proof of the main theorem.

\begin{proposition}\label{prop: particular cases} 
Let $\C$ be a category with pullbacks of split epimorphisms along split epimorphisms.
\begin{enumerate}
\item A tuple $(C_1,C_0,d,e,c)$ is a reflexive graph if and only if the diagram 
\begin{equation}\label{diag: kite1}
\vcenter{\xymatrix{
C_1 \ar@<.5ex>[r]^-{d} \ar@{=}[rd]_-{} & C_0
\ar@<.5ex>[l]^-{e}
\ar@<-.5ex>[r]_-{e}
\ar[d]^-{e} & C_1 \ar@<-.5ex>[l]_-{c} \ar@{=}[ld]^-{}\\
& C_1\ar[dl]_{d}\ar[rd]^{c}\\
C_0 && C_0
}}
\end{equation}
is a directed kite.

\item A tuple $(C_1,C_0,d,e,c,m)$ is a unital multiplicative graph if and only if the morphism 
\[
m\colon C_1\times_{C_0} C_1 \to C_1
\] 
defines a multiplication for the directed kite
\begin{equation}\label{diag: kite1.1}
\vcenter{\xymatrix{
C_1 \ar@<.5ex>[r]^-{d} \ar@{=}[rd]_-{} & C_0
\ar@<.5ex>[l]^-{e}
\ar@<-.5ex>[r]_-{e}
\ar[d]^-{e} & C_1 \ar@<-.5ex>[l]_-{c} \ar@{=}[ld]^-{}\\
& C_1\ar[dl]_{d}\ar[rd]^{c}\\C_0 && C_0
}}
\end{equation}

\item A diagram of the form
\begin{equation}\label{diag: morphims of reflexive graphs}
\xymatrix{
C_1 \ar@<1ex>[r]^{d} \ar@<-1ex>[r]_{c}\ar[d]_{f_1} & C_0 \ar[l]|{e}\ar[d]^{f_0} \\
C'_1 \ar@<1ex>[r]^{d'} \ar@<-1ex>[r]_{c'} & C'_0 \ar[l]|{e'} 
}
\end{equation}
represents a morphism of reflexive graphs if and only if the diagram
\begin{equation}\label{diag: kite1.2}
\vcenter{
\xymatrix{
C_1 \ar@<.5ex>[r]^-{d} \ar@{->}[rd]_-{f_1} & C_0
\ar@<.5ex>[l]^-{e}
\ar@<-.5ex>[r]_-{e}
\ar[d]|-{e'f_0} & C_1 \ar@<-.5ex>[l]_-{c} \ar@{->}[ld]^-{f_1}\\
& C'_1\ar[dl]_{d'}\ar[rd]^{c'}\\
C'_0 && C'_0
}}
\end{equation}
is a directed kite and satisfies $d'e'=1_{C'_1}=c'e'$.
Moreover, if the directed kite is admissible (that is, there exists a unique multiplication $\mu\colon C_2 \to C'_1$), then for any two multiplications on the reflexive graphs, say $m\colon C_2\to C_1$ and $m'\colon C'_2\to C'_1$, it holds that
\[
m'(f_1\times_{f_0} f_1) = f_1 \, m,
\]
so that the triple $(f_1\times_{f_0} f_1, f_1, f_0)$ defines a morphism of unital multiplicative graphs, as illustrated by the diagram
\begin{equation}\label{diag: morphims of unital multiplicative reflexive graphs}
\xymatrix{
C_2 \ar[r]^{m}\ar[d]_{f_2} & C_1 \ar@<1ex>[r]^{d} \ar@<-1ex>[r]_{c}\ar[d]_{f_1} & C_0 \ar[l]|{e}\ar[d]^{f_0} \\
C'_2 \ar[r]^{m'} & C'_1 \ar@<1ex>[r]^{d'} \ar@<-1ex>[r]_{c'} & C'_0 \ar[l]|{e'}
}
\end{equation}

\item If $(C_1,C_0,d,e,c,m)$ is a unital multiplicative graph then the diagrams 
\begin{equation}\label{diag: kite2.1}
\vcenter{\xymatrix{C_2 \ar@<.5ex>[r]^-{\pi_2} \ar[rd]_-{m} & C_1
\ar@<.5ex>[l]^-{e_2}
\ar@<-.5ex>[r]_-{e_1}
\ar@{=}[d]^-{} & C_2 \ar@<-.5ex>[l]_-{\pi_1} \ar[ld]^-{m}\\
& C_1\ar[dl]_{d}\ar[rd]^{c}\\\cdot&&\cdot}}
\end{equation}
 and 
 \begin{equation}\label{diag: kite3.1}
\vcenter{\xymatrix{C_2 \ar@<.5ex>[r]^-{m} \ar[rd]_-{\pi_2} & C_1
\ar@<.5ex>[l]^-{e_2}
\ar@<-.5ex>[r]_-{e_1}
\ar@{=}[d]^-{} & C_2 \ar@<-.5ex>[l]_-{m} \ar[ld]^-{\pi_1}\\
& C_1\ar[dl]_{d}\ar[rd]^{c}\\\cdot&&\cdot}}
\end{equation}
are directed kites.

\item A unital multiplicative graph $(C_1,C_0,d,e,c,m)$ is an internal category if and only if the directed kite
\begin{equation}\label{diag: kite2}
\vcenter{\xymatrix{C_2 \ar@<.5ex>[r]^-{\pi_2} \ar[rd]_-{m} & C_1
\ar@<.5ex>[l]^-{e_2}
\ar@<-.5ex>[r]_-{e_1}
\ar@{=}[d]^-{} & C_2 \ar@<-.5ex>[l]_-{\pi_1} \ar[ld]^-{m}\\
& C_1\ar[dl]_{d}\ar[rd]^{c}\\\cdot&&\cdot}}
\end{equation}
is admissible.

\item An internal category $(C_1,C_0,d,e,c,m)$ is  an internal groupoid if and only if the directed kite
\begin{equation}\label{diag: kite3}
\vcenter{\xymatrix{C_2 \ar@<.5ex>[r]^-{m} \ar[rd]_-{\pi_2} & C_1
\ar@<.5ex>[l]^-{e_2}
\ar@<-.5ex>[r]_-{e_1}
\ar@{=}[d]^-{} & C_2 \ar@<-.5ex>[l]_-{m} \ar[ld]^-{\pi_1}\\
& C_1\ar[dl]_{d}\ar[rd]^{c}\\\cdot&&\cdot}}
\end{equation}
is admissible.

\item If $(D,d,c)$ is a span then the kernel pair construction gives a directed kite as follows
\begin{equation}\label{diag: kite5}
\vcenter{\xymatrix{D(d) \ar@<.5ex>[r]^-{d_2} \ar@{->}[rd]_-{d_1} & D
\ar@<.5ex>[l]^-{\Delta}
\ar@<-.5ex>[r]_-{\Delta}
\ar@{=}[d]^-{} & D(c) \ar@<-.5ex>[l]_-{c_1} \ar@{->}[ld]^-{c_2}\\
& D\ar[dl]_{d}\ar[rd]^{c}\\\cdot&&\cdot}}
\end{equation}

This yields a reflection between the category of directed kites  and the category of spans

\[\xymatrix{\Kite \ar@<.5ex>[r]  &\Span\ar@<.5ex>[l]}\]

A directed kite goes to its direction span, a span goes to the directed kite displayed above. Moreover, the span $(D,d,c)$ is a pregroupoid if and only if its associated directed kite is multiplicative.

\item Consider a morphism between two directed kites as illustrated:
\begin{equation}\label{diag: kite6}
\vcenter{\xymatrix@!0@=4em{
A \ar@<.5ex>[r]^-{f} \ar[rd]_-{\alpha} & B
\ar@<.5ex>[l]^-{r}
\ar@<-.5ex>[r]_-{s}
\ar[d]^-{\beta} & C \ar@<-.5ex>[l]_-{g} \ar[ld]^-{\gamma}
&
A' \ar@<.5ex>[r]^-{f'} \ar[rd]_-{\alpha'} & B'
\ar@<.5ex>[l]^-{r'}
\ar@<-.5ex>[r]_-{s'}
\ar[d]^-{\beta'} & C' \ar@<-.5ex>[l]_-{g'} \ar[ld]^-{\gamma'}
\\
& D\ar[rrr]^{h_D}\ar[dl]_{d}\ar[rd]^{c}
&&& D'\ar[dl]_{d'}\ar[rd]^{c'}
\\
D_0&&D_1
& D'_0&&D'_1
}}
\end{equation}
The morphism $h_D$ is accompanied by additional morphisms, let us say
$h_0\colon{ D_0 \to D'_0}$, 
$h_1\colon D_1 \to D'_1$, 
$h_A\colon A \to A'$,  
$h_B\colon B \to B'$, 
$h_C\colon C \to C'$ 
which together satisfy the necessary compatibility conditions, making the full diagram commute. These data define what we refer to as a morphism of directed kites.

This morphism induces the following diagram:
\begin{equation}\label{diag: kite7}
\vcenter{\xymatrix@!0@=4em{
A \ar@<.5ex>[r]^-{f} \ar[rd]_-{h_D\alpha} & B
\ar@<.5ex>[l]^-{r}
\ar@<-.5ex>[r]_-{s}
\ar[d]^-{h_D\beta} & C \ar@<-.5ex>[l]_-{g} \ar[ld]^-{h_D\gamma}
\\
& D'\ar[dl]_{d'}\ar[rd]^{c'}
\\
D'_0&&D'_1
}}
\end{equation}
which is itself a directed kite.

Moreover, suppose this directed kite admits a unique multiplication, and that both of the original directed kites are multiplicative, with respective multiplication morphisms
\[
m\colon A \times_B C \to D
\quad \text{and} \quad
m'\colon A' \times_{B'} C' \to D'.
\]
Then the following compatibility condition holds
\[
h_D \circ m = m' \circ (h_A \times_{h_B} h_C),
\]
where $h_A \times_{h_B} h_C$ denotes the induced morphism from the pullback $A \times_B C$ to $A' \times_{B'} C'$.

\end{enumerate}
\end{proposition}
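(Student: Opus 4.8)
The plan is to prove the statement in two stages: first that the induced diagram \eqref{diag: kite7} is genuinely a directed kite, and then to identify both $h_D m$ and $m'(h_A \times_{h_B} h_C)$ as multiplications for that induced kite, whence they must agree by the assumed uniqueness. For the first stage I would simply unwind the four defining conditions of a directed kite for \eqref{diag: kite7}, using the commutativity of the morphism of kites. The relations $\alpha' h_A = h_D\alpha$, $\beta' h_B = h_D\beta$, $\gamma' h_C = h_D\gamma$, $d' h_D = h_0 d$, $c' h_D = h_1 c$ (together with $\alpha r = \beta = \gamma s$, $d\alpha = d\beta f$ and $c\beta g = c\gamma$ from the unprimed kite) reduce each required identity to one already holding below. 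For instance $(h_D\alpha)r = h_D(\alpha r) = h_D\beta$, and $d'(h_D\alpha) = h_0 d\alpha = h_0 d\beta f = d'(h_D\beta)f$; the remaining two are symmetric. Since $(D', d', c')$ is a span by hypothesis, \eqref{diag: kite7} is a directed kite.

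Before the second stage I would record that $h_A \times_{h_B} h_C$ is well defined: writing $\pi_1,\pi_2$ and $\pi'_1,\pi'_2$ for the two pullback cones, the equality $f\pi_1 = g\pi_2$ together with the compatibilities $f' h_A = h_B f$ and $g' h_C = h_B g$ gives $f'(h_A\pi_1) = g'(h_C\pi_2)$, so the universal property of $A'\times_{B'} C'$ yields a unique $k := h_A\times_{h_B} h_C$ with $\pi'_1 k = h_A\pi_1$ and $\pi'_2 k = h_C\pi_2$. That $h_D m$ is a multiplication for \eqref{diag: kite7} is then immediate from the four identities defining $m$: composing them on the left with $h_D$ and using $d'h_D = h_0 d$, $c'h_D = h_1 c$ gives $h_D m\langle 1_A, sf\rangle = h_D\alpha$, $h_D m\langle rg, 1_C\rangle = h_D\gamma$, $d'(h_D m) = h_0 d\gamma\pi_2 = d'(h_D\gamma)\pi_2$ and $c'(h_D m) = h_1 c\alpha\pi_1 = c'(h_D\alpha)\pi_1$.

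The analogous verification for $m' k$ is the only step requiring genuine care, and I regard it as the main obstacle. The point is to transport the two sections along the morphism of kites, i.e. to prove the pullback identities
\[
k\langle 1_A, sf\rangle = \langle 1_{A'}, s'f'\rangle h_A,
\qquad
k\langle rg, 1_C\rangle = \langle r'g', 1_{C'}\rangle h_C.
\]
Each is established by composing with $\pi'_1$ and $\pi'_2$ and invoking the uniqueness clause of the universal property of $A'\times_{B'} C'$ (this is exactly where the full compatibility data are consumed); for example one uses $s'f' h_A = s' h_B f = h_C s f$. Granting these, the defining identities of $m'$ give $m' k\langle 1_A, sf\rangle = \alpha' h_A = h_D\alpha$ and $m'k\langle rg, 1_C\rangle = \gamma' h_C = h_D\gamma$, while $\pi'_2 k = h_C\pi_2$ together with $\gamma' h_C = h_D\gamma$ yields $d'(m'k) = d'\gamma'\pi'_2 k = h_0 d\gamma\pi_2 = d'(h_D\gamma)\pi_2$, and symmetrically $c'(m'k) = c'(h_D\alpha)\pi_1$. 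Thus $m'k$ is also a multiplication for \eqref{diag: kite7}.

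Since that induced kite admits a unique multiplication by hypothesis, the two multiplications $h_D m$ and $m'(h_A\times_{h_B} h_C)$ must coincide, which is the desired equality $h_D \circ m = m' \circ (h_A \times_{h_B} h_C)$. I would emphasize that the uniqueness part of the pullback universal property — reflecting the joint monicity of $(\pi'_1,\pi'_2)$ — is what makes the argument go through, both in constructing $k$ and in proving the two transport identities; the rest is a routine bookkeeping of the commutativity relations.
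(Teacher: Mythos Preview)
Your argument for item~(8) is correct and is exactly the intended one: exhibit both $h_D m$ and $m'(h_A\times_{h_B}h_C)$ as multiplications for the induced kite~\eqref{diag: kite7} and invoke uniqueness. The paper states Proposition~\ref{prop: particular cases} without proof, treating all eight items as routine unwinding of Definition~\ref{def: basics}; your write-up supplies precisely the details the paper omits for the most substantial item, and the transport identities $k\langle 1_A,sf\rangle=\langle 1_{A'},s'f'\rangle h_A$ and $k\langle rg,1_C\rangle=\langle r'g',1_{C'}\rangle h_C$ are indeed the only places where care is needed.

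One remark: your proposal addresses only item~(8). Items~(1)--(7) are genuinely easier (each is a direct comparison of the axioms in Definition~\ref{def: basics} with the directed-kite conditions for the specific diagram displayed), but a complete proof of the proposition should at least indicate that the same pattern---instantiate the dikite axioms and read off the corresponding structural axioms---handles them all. Item~(3), for instance, is a special case of the mechanism you prove in item~(8), and items~(5) and~(6) likewise reduce to identifying the associativity morphism (resp.\ a morphism encoding inverses) as the unique multiplication of the relevant kite.
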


Given a class of spans \(\M\) in \(\C\), we may restrict our attention in all the previous definitions to those spans \((D,d,c)\) that belong to \(\M\). This includes the case of reflexive graphs: since their domain and codomain morphisms are split epimorphisms, \(d\) and \(c\) naturally form a span.

In each situation there is a canonical forgetful functor, as illustrated below.

\[
\xymatrix{
\Grpd(\C,\M)\ar[d]\ar[r]^{F_4} & \RG(\C,\M)\ar@{=}[d]\\
\Cat(\C,\M)\ar[d]\ar[r]^{F_3} & \RG(\C,\M)\ar@{=}[d]\\
\MG(\C,\M)\ar[r]^{F_2} & \RG(\C,\M)\ar@<-.5ex>[d]\\
\PreGrpd(\C,\M)\ar[u]\ar[d]\ar[r]^{F_1} & \Span(\C,\M)\ar@<-.5ex>[d]\ar@<-.5ex>[u] & \M\ar[l]_-{\cong}\\
\MKite(\C,\M)\ar[r]^{F_0} & \Kite(\C,\M)\ar@<-.5ex>[u]_{}}
\]

 For instance, the subcategory \(\RG(\C, \M)\) of reflexive graphs whose span part is in the class $\M$  determines a corresponding subcategory \(\MG(\C, \M)\) of unital multiplicative graphs, fitting into a pullback square:
\[
\xymatrix{
\MG(\C, \M) \ar[r]^{F^{\M}} \ar[d] & \RG(\C, \M) \ar[d] \\
\MG(\C) \ar[r]^F & \RG(\C)
}
\]
Here, \(F^{\M}\) denotes the restriction of \(F\) to those unital multiplicative graphs whose underlying span \((d, c)\) lies in \(\M\).

 Similarly, if the span part of a kite is required to be in $\M$ then it is an object in the  category $\Kite(\C,\M)$, where the morphisms are the natural transformations between such diagrams, as illustrated before. Further details may be found in \cite{MF17,MF38,MF2025}.


We have thus defined the relevant categories and the canonical functors between them that we will consider.

\section{Proof of the Main Theorem}\label{sec: proof of main thm}

In this section, we prove the equivalence among all the items appearing in the main theorem stated in the introduction (Theorem \ref{thm Main}, as well as its three corollaries). We include an additional item concerning unital multiplicative graphs to facilitate the proof (see also Definition~\ref{def: basics}, item~(10)). Later on, we will present an extended version of the theorem incorporating further related conditions (see Theorem \ref{thm Main extended}).

We are now in a position to restate and prove our main theorem.

\begin{theorem}\label{thm Main Lemma}
Let \( \C \) be a category with pullbacks of split epimorphisms along split epimorphisms, and let \( \M \) be a class of spans in \( \C \) that is closed under the kernel pair construction and contains the span part of every local product. Then the following conditions are equivalent:
\begin{enumerate}
\item The Lawvere Condition holds in $\C$ with respect to the class $\M$;
\item The forgetful functor $\Grpd(\C,\M)\to\RG(\C,\M)$ has a section;
\item The forgetful functor $\Cat(\C,\M)\to\RG(\C,\M)$ has a section;
\item The forgetful functor $\MG(\C,\M)\to\RG(\C,\M)$ has a section;
\item Every jointly monic span $(E,p_1,p_2)\in \M$ is $\M$-compatible;
\item Every dikite diagram in $\C$ whose direction span is in $\M$ is admissible;



\item The forgetful functor $\KockGrpd(\C,\M)\to\Span(\C,\M)$ is an isomorphism.



\end{enumerate}

\end{theorem}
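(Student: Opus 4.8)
The plan is to prove the cyclic chain of implications $(1)\Rightarrow(2)\Rightarrow(3)\Rightarrow(4)\Rightarrow(5)\Rightarrow(6)\Rightarrow(7)\Rightarrow(1)$, while crucially exploiting Proposition~\ref{prop: particular cases} to translate between the multiplicative-graph language and the kite language. The architecture is that the forgetful functors $F_4,F_3,F_2$ factor through one another, so the sections compose downward; the genuinely new content lives in the equivalence of the diagrammatic conditions (4), (5), (6) with the existence of sections, and in closing the loop back to (1) via pregroupoids. I would treat the three ``sectioning'' conditions (2),(3),(4) together first, since their mutual implications are essentially formal: any section of $\Grpd(\C,\M)\to\RG(\C,\M)$ restricts/composes to give sections of the functors below it, and conversely a section producing a multiplicative structure can be upgraded using admissibility of the kites in Proposition~\ref{prop: particular cases} items (6)–(7) to an internal category and then a groupoid.

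The heart of the argument is the passage through the kite characterizations. For $(4)\Rightarrow(5)$ I would take an arbitrary dikite diagram \eqref{diag: dikite (d,c)} whose direction $(D,d,c)\in\M$ and build from it, via the local product construction of Definition~\ref{def: basics} item~(8), the split span $(E,e_1,p_1,e_2,p_2)$ where $(E,p_1,p_2)$ is the pullback $A\times_B C$; the hypothesis that $\M$ contains span parts of local products guarantees $(E,p_1,p_2)\in\M$, and the jointly monic hypothesis of (4) is exactly the statement that $(p_1,p_2)$ is jointly monic for a pullback of split epis. One then checks that the compatibility identities of Definition~\ref{def: simple} item~(1) translate verbatim into the dikite equations, so $\M$-compatibility of $(E,p_1,p_2)$ yields precisely the required unique multiplication $m\colon A\times_B C\to D$. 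The reverse reading gives $(5)\Rightarrow(4)$. For the link to pregroupoids, $(5)\Leftrightarrow(6)$ is obtained by specializing the dikite to the canonical kite \eqref{diag: kite5} associated to a span by the kernel pair construction (Proposition~\ref{prop: particular cases} item~(8)): admissibility of that particular kite is, after unwinding, exactly the existence and uniqueness of the pregroupoid operation $p_D$, and the naturality square plus the domain/codomain equations are the translations of $dm=d\gamma\pi_2$, $cm=c\alpha\pi_1$.

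To close the loop I would show $(6)\Rightarrow(7)\Rightarrow(1)$. The content of $(7)$ beyond (6) is the associative law; here I would argue that the canonical pregroupoid operation, being defined by a universal (unique-factorization) property through jointly monic data, automatically satisfies associativity --- the two sides $p_D\langle u,v,p_D\langle x,y,z\rangle\rangle$ and $p_D\langle p_D\langle u,v,x\rangle,y,z\rangle$ both solve the same uniqueness problem and hence coincide, so every pregroupoid in $\M$ is in fact a Kock pregroupoid and the forgetful functor $\KockGrpd(\C,\M)\to\Span(\C,\M)$ is not merely a section-admitting functor but an isomorphism (its inverse sends each span to its unique canonical structure). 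Finally $(7)\Rightarrow(1)$ reassembles a reflexive graph in $\RG(\C,\M)$ into a groupoid: given the reflexive graph, its span $(d,c)$ acquires a unique $p_D$ by (7), and the multiplication $m=p_D\langle \pi_1, e c\,\pi_1, \pi_2\rangle$ (the standard formula expressing composition via the Mal'tsev-type operation) together with the inverse $i=p_D\langle ec,1,ed\rangle$ furnishes the groupoid structure, with uniqueness inherited from the uniqueness of $p_D$; this is exactly the relative Lawvere condition.

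The main obstacle I expect is the verification at the step $(7)\Rightarrow(1)$, namely checking that the composition and inverse defined from $p_D$ genuinely satisfy the unital, associativity, and inverse laws of an internal groupoid, and that the resulting assignment is functorial and inverse to forgetting --- this requires careful bookkeeping of the kernel-pair and pullback projections and repeated appeals to joint monicity to cancel, rather than any single deep idea. A secondary subtlety is ensuring at every stage that the intermediate objects (local products, kernel pair constructions, the spans $C_2=C_1\times_{C_0}C_1$) actually lie in $\M$, which is where the two standing hypotheses on $\M$ (closure under the kernel pair construction and containing span parts of local products) must be invoked explicitly; omitting either would break the chain. All remaining implications are bookkeeping translations mediated by Proposition~\ref{prop: particular cases}, and I would present them compactly rather than expanding every diagram chase.
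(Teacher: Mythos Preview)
Your cycle has a genuine gap at $(4)\Rightarrow(5)$. What you describe under that label is actually the argument for $(5)\Rightarrow(6)$: you take a dikite, form the local product $E=A\times_B C$, and then \emph{use} $\M$-compatibility of $(E,p_1,p_2)$ to obtain the multiplication $m$. That is the easy direction (and it matches the paper's $(5)\Rightarrow(6)$). But condition~(4) gives you only a section of $\MG(\C,\M)\to\RG(\C,\M)$, i.e.\ a functorial choice of unital multiplication on each reflexive graph in $\M$; condition~(5) asks that for \emph{every} jointly monic $(E,p_1,p_2)\in\M$ and every $(\alpha,\gamma,e_1,e_2)$ as in Definition~\ref{def: simple}(1), there is a unique $m\colon E\to D$ with the stated properties. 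Nowhere in your outline is it explained how a mere multiplicative structure on reflexive graphs produces this $m$, or why it is unique. Your remark that the section ``can be upgraded \dots\ using admissibility of the kites in Proposition~\ref{prop: particular cases}(6)--(7)'' presupposes condition~(6), which lies downstream of~(5), so that route is circular.

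The paper's $(4)\Rightarrow(5)$ is the technical heart and uses the section in an essential way. Given the data of Definition~\ref{def: simple}(1), one applies the section to the two reflexive graphs $(E(p_1,p_2),\dom,\cod,\Delta)$ and $(D(c,d),\dom,\cod,\Delta)$, which lie in $\M$ by closure under the kernel pair construction, obtaining multiplications $\mu_E,\mu_D$. One then writes down explicit morphisms
\[
\delta\colon E\to E(p_1,p_2)\times_E E(p_1,p_2),\qquad \theta\colon E\to D(c,d)\times_D D(c,d)
\]
built from $e_ip_i$, $\alpha p_1$, $\gamma p_2$ and the compatibility identities, and sets $m=\midop_D\circ\mu_D\circ\theta$. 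Existence is then a direct check; uniqueness uses that $(p_1,p_2)$ is jointly monic to show $\midop_E\circ\mu_E\circ\delta=1_E$, together with naturality of the section along the span morphism determined by $m$ itself, forcing any candidate $m$ to equal $\midop_D\circ\mu_D\circ\theta$. This is the step your plan is missing; the remaining implications in your outline (modulo the label shift) align with the paper's.
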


\begin{proof}
We will show that:\\ $(1)\Rightarrow(2)\Rightarrow(3)\Rightarrow(4)\Rightarrow(5)\Rightarrow(6)\Rightarrow(7)\Rightarrow(1)$.

Clearly, (1) implies (2). Indeed, when we say that the Lawvere Condition holds in $\C$ with respect to the class $\M$, we are precisely saying that the forgetful functor $\Grpd(\C,\M)\to\Span(\C,\M)$ is an isomorphism, hence in particular it has a section.

Similarly, it is clear that (2) implies (3) which in turn implies (4).

 Let us prove (4) implies (5). Given a diagram in $\C$ of the form  $(\ref{diag: kite main thm (def) simple})$ as in Definition \ref{def: simple}, consider the diagram
\begin{equation}\label{diag: mu is natural}
\vcenter{\xymatrix{
E(p_1,p_2)\times_E E(p_1,p_2) \ar[dd]_{m^3\times_m m^3}\ar@<0ex>[r]^-{\mu_E} & E(p_1,p_2) \ar@<0ex>[r]^-{\midop_E} \ar@<0ex>[dd]^-{m^3} & E \ar@<0ex>@{-->}[dd]^-{m} \ar@/_3pc/[ll]_{\delta} \ar[rr]^{p_1}  \ar[ddll]_{\theta} \ar[rd]^{p_2} && A \ar[dd]^{c\alpha}
\\& & & C \ar[dd]^(.3){d\gamma}
\\
D(c,d)\times_D D(c,d) \ar[r]^-{\mu_D} & D(c,d) \ar[r]^-{\midop_D} & D \ar[rr]^(.3){c} \ar[rd]_{d} && D_1
\\ & & & D_0
}}
\end{equation}
where $\midop_D$ is obtained as in the kernel pair construction and $\mu_D$ is the multiplication on the reflexive graph $(D(c,d),\dom,\cod,\Delta)$ which is given by the section to the forgetful functor from unital multiplicative graphs whose span part is in $\M$ to the category of reflexive graphs whose span part is in $\M$
\[
\xymatrix@C=4em{
D(c,d)\times_D D(c,d)
  \ar[r]^-{\mu_D}
& D(c,d)
  \ar@<1.2ex>[r]^-{\dom_D}
  \ar@<-1.5ex>[r]|-{\cod_D}
  \ar@<-2.2ex>@/_/[r]_-{\midop_D}
& D
  \ar[l]|-{\Delta_D}
}
\]
similarly we obtain (since $(E,p_1,p_2)$ is in $\M$)
\[
\xymatrix@C=4em{
E(p_1,p_2)\times_E E(p_1,p_2)
  \ar[r]^-{\mu_E}
& E(p_1,p_2)
  \ar@<1.2ex>[r]^-{\dom_E}
  \ar@<-1.5ex>[r]|-{\cod_E}
  \ar@<-2.2ex>@/_/[r]_-{\midop_E}
& E
  \ar[l]|-{\Delta_E}
}
\]
The morphisms $\delta$ and $\theta$ are well defined and are of the form
\begin{align*}
\theta=\langle\langle\alpha p_1,\alpha p_1, \alpha p_1 e_2 p_2\rangle,\langle\gamma p_2 e_1 p_1, \gamma p_2, \gamma p_2\rangle\rangle\\
\delta=\langle\langle e_1p_1,e_1p_1,e_1p_1e_2p_2\rangle,\langle e_2p_2e_1p_1,e_2p_2,e_2p_2\rangle
\rangle.
\end{align*}
For the existence we take
\begin{align}
m=\midop_D\circ \mu_D\circ \theta
\end{align}
whereas for the uniqueness we first observe that:
\begin{align*}
\text{the pair $(p_1,p_2)$ is jointly monic}\\
p_1\circ \midop_E\circ \mu_E \circ \delta=p_1\\
p_2\circ \midop_E\circ \mu_E \circ \delta=p_2\\
\end{align*}
and hence $1_E=\midop_E\circ \mu_E\circ \delta$, and moreover, $(m^3\times_m m^3)\circ \delta=\theta$, which, together with the hypoteses that $(E, p_1,p_2)\in \M$, gives, by naturality of $\midop$ and $\mu$ 
\begin{align*}
\midop_D\circ\mu_D\circ(m^3\times_m m^3)=m\circ\midop_E\mu_E
\end{align*} 
and as a consequence
\begin{align*}
m&=m\circ 1_E= m\circ \midop_E\circ \mu_E\circ \delta\\
&=\midop_D\circ\mu_D\circ(m^3\times_m m^3)\circ \delta\\
&=\midop_D\circ \mu_D\circ \theta.
\end{align*}
This means that $m$ is uniquely determined as desired.

In order to prove  $(5)\Rightarrow  (6)$ it suffices to observe from Definition \ref{def: simple} that the diagram $(\ref{diag: dikite (d,c) in def simple})$ induces a diagram of the form $(\ref{diag: kite main thm (def) simple})$ in which $E=A\times_B C$, $p_1=\pi_1$ and $p_2=\pi_2$ are the two canonical projections from the pullback of $f$ and $g$, whereas $e_1=\langle 1_A,sf\rangle$ and $e_2=\langle rg,1_C\rangle$ are the two canonical morphisms induced by the sections $r$ and $s$, respectively, of $f$ and $g$. It is now a simple observation to check that all conditions are satisfied and hence the desired morphism $m\colon A\times_B C\to D$ is obtained.  

That (6) implies (7) is a consequence of Proposition \ref{prop: particular cases}. Indeed, if every directed kite whose direction span is in the class $\M$ is admissible then, in particular, every span in $\M$ has a unique canonical pregroupoid structure which is associative.

Finally, (7) implies (1) is a well-known fact. Take any reflexive graph $(C_1,C_0,d,e,c)$ with $(C_1,d,c)\in \M$, consider its pregroupoid structure $$p_{C_1}\colon{C_1(d,c)\to C_1}$$ and set $m=p_{C_1}\langle 1_{C_1}, ed, 1_{C_1}\rangle$ and $i=p_{C_1}\langle 1_{ed,C_1}, ec\rangle$ so that $$(C_1,C_0,d,e,c,m,i)$$ becomes an internal groupoid. Uniqueness follows from the naturality of the canonical morphism $p_{C_1}\colon{C_1(d,c)\to C_1}$.
\end{proof}


\section{Proof of the Corollaries}

Let us now prove the three corollaries stated in the introduction.

\subsection{Weakly Mal'tsev}

We will show that for a category \(\C\) with pullbacks of split epimorphisms along split epimorphisms and equalizers, the category \(\C\) is weakly Mal'tsev if and only if the relative Lawvere condition holds in \(\C\) with respect to the class of jointly strongly monic spans.

One direction is straightforward. Indeed, if \(\C\) is weakly Mal'tsev, then every kite diagram whose direction span \((D, d, c)\) lies in \(\M\) is admissible. The required morphism is obtained as the unique diagonal filler arising from the fact that a jointly epic cospan is orthogonal to every jointly strongly monic span, as illustrated by the square
\[
\xymatrix{
A + C \ar[r]^{[e_1, e_2]} \ar[d]_{\langle \alpha, \gamma \rangle} 
  & A \times_B C \ar[d]^{\langle d\gamma\pi_2,\, c\alpha\pi_1 \rangle} \ar@{-->}[ld]_m \\
D \ar[r]_{\langle d, c \rangle} & D_0 \times D_1
}
\]
in the case where the product \(D_0 \times D_1\) and the coproduct \(A + C\) exist. Clearly, this assumption is not essential, since we are working with jointly epic cospans and jointly strongly monic spans.

Conversely, given any local product diagram of the form
\begin{equation}
\vcenter{\xymatrix{
A \ar@<-.5ex>[r]_-{e_1} & E \ar@<-.5ex>[l]_-{p_1} \ar@<.5ex>[r]^-{p_2} & C \ar@<.5ex>[l]^-{e_2}
}}
\end{equation}
obtained as the pullback of \(f\) and \(g\) in
\begin{equation}
\vcenter{\xymatrix@!0@=4em{
A \ar@<.5ex>[r]^-{f} & B \ar@<.5ex>[l]^-{r} \ar@<-.5ex>[r]_-{s} & C \ar@<-.5ex>[l]_-{g}
}}
\end{equation}
with \( f r = 1_B = g s \), our task is to prove that the pair \((e_1, e_2)\) is jointly epic.  

To this end, consider the equalizer of any two parallel morphisms \[u, v \colon E \to Z\] satisfying \(u e_1 = v e_1\) and \(u e_2 = v e_2\). Let \(\overline{e_1}\) and \(\overline{e_2}\) be the morphisms induced by \(e_1\) and \(e_2\), respectively, as displayed:
\[
\xymatrix{
C \ar@{-->}[d]_{\overline{e_2}} \ar[rrd]^{e_2} \\
K \ar[rr]^{k = \mathrm{eq}(u,v)} && E \ar@<.5ex>[r]^{u} \ar@<-.5ex>[r]_{v} & Z \\
A \ar@{-->}[u]^{\overline{e_1}} \ar[rru]_{e_1}
}
\]

We can now form the kite diagram
\begin{equation}
\vcenter{\xymatrix@!0@=4em{
A \ar@<.5ex>[r]^-{f} \ar[rd]_-{\overline{e_1}} & B
\ar@<.5ex>[l]^-{r}
\ar@<-.5ex>[r]_-{s}
\ar[d]^-{} & C \ar@<-.5ex>[l]_-{g} \ar[ld]^-{\overline{e_2}} \\
& K \ar[dl]_{p_2 k} \ar[rd]^{p_1 k} \\
C && A
}}
\end{equation}
and observe that the span \((K, p_2 k, p_1 k) \in \M\).  
By admissibility, we obtain a morphism \(E \to K\), which turns the monomorphism \(k \colon K \to E\) into an isomorphism. Consequently, \(u = v\), and hence the pair \((e_1, e_2)\) is jointly epimorphic.

\subsection{Mal'tsev}
We omit the proof of the classical case of a Mal'tsev category, as it is well documented in the literature. Even though we do not assume the existence of all finite limits, the arguments proceed in close analogy with the standard proofs; see, for example, \cite{Bournetall} and the references therein.

\subsection{Naturally Mal'tsev}

Suppose that a category \(\C\) is naturally Mal'tsev. Then any span \((D, d, c)\) in \(\C\) such that \(d\) and \(c\) admit kernel pairs can be equipped with a natural pregroupoid structure
\[
p = p_D\langle \dom, \midop, \cod \rangle \colon D(d, c) \to D,
\]
where \(p_D \colon D^3 \to D\) is the natural Mal'tsev operation on the object \(D\), and \(\dom, \midop, \cod \colon D(d, c) \to D\) are the projections obtained by the kernel pair construction, as in Definition~\ref{def: basics}, item~(3).

Conversely, every object \(X\) in \(\C\) inherits a Mal'tsev operation
\[
p_X \colon X^3 \to X
\]
by admissibility of the dikite diagram
\begin{equation}
\vcenter{\xymatrix@!0@=4em{
X \times X \ar@<.5ex>[r]^-{\pi_2} \ar[rd]_-{\pi_1} & X
\ar@<.5ex>[l]^-{\Delta}
\ar@<-.5ex>[r]_-{\Delta}
\ar@{=}[d]^-{} & X \times X \ar@<-.5ex>[l]_-{\pi_1} \ar[ld]^-{\pi_2} \\
& X \ar[dl]_{} \ar[rd]^{} \\
1 && 1
}}
\end{equation}
which is natural by admissibility of every diagram of the form
\begin{equation}
\vcenter{\xymatrix@!0@=4em{
X \times X \ar@<.5ex>[r]^-{\pi_2} \ar[rd]_-{f\pi_1} & X
\ar@<.5ex>[l]^-{\Delta}
\ar@<-.5ex>[r]_-{\Delta}
\ar[d]^-{f} & X \times X \ar@<-.5ex>[l]_-{\pi_1} \ar[ld]^-{f\pi_2} \\
& Y \ar[dl]_{} \ar[rd]^{} \\
1 && 1
}}
\end{equation}
for every morphism \(f \colon X \to Y\).
This shows that the forgetful functor from pregroupoids to spans admits a section. The result then follows from the extended version of the main theorem, presented in the next section.

\section{Extending the scope of the main theorem}\label{sec: extending main thm}

In this section, we include a few additional remarks related to the main theorem presented in the introduction, the most notable of which is the connection with pseudogroupoids and, consequently, with categorical commutator theory~\cite{Gran2002,Janelidze2016,JK}. For the box construction and related considerations, see Definition~\ref{def: basics}, item~(2). Recall that $\MG(\C,\M)$ denotes the category of unital multiplicative graphs in $\C$ whose span part is in the class $\M$. 


\begin{theorem}\label{thm Main extended}
Let \( \C \) be a category admitting pullbacks of split epimorphisms along split epimorphisms, and let \( \M \) denote a class of spans in \( \C \) for which the box construction is defined. Assume further that \( \M \) is closed under the kernel pair construction and contains the span part of every local product.

 Then the following conditions are equivalent:
\begin{enumerate}
\item[(A)] The Lawvere Condition holds in $\C$ with respect to the class $\M$;
\item[(B)] The forgetful functor $\Grpd(\C,\M)\to\RG(\C,\M)$ has a section;
\item[(C)] The forgetful functor $\Cat(\C,\M)\to\RG(\C,\M)$ is an isomorphism;
\item[(D)] The forgetful functor $\Cat(\C,\M)\to\RG(\C,\M)$ has a section;
\item[(E)] The forgetful functor $\MG(\C,\M)\to\RG(\C,\M)$ is an isomorphism;
\item[(F)] The forgetful functor $\MG(\C,\M)\to\RG(\C,\M)$ has a section;

\item[(G)] For every diagram in \( \C \) of the form
\begin{equation}\label{diag: kite main thm}
\vcenter{\xymatrix@!0@=4em{
& E \ar@{-->}[dd]
\ar@<-.5ex>[dl]_-{p_1}
\ar@<.5ex>[dr]^-{p_2}
\\
A \ar@<-.5ex>[ru]_-{e_1} \ar[rd]_-{\alpha} 
&  & 
C \ar@<.5ex>[lu]^-{e_2} \ar[ld]^-{\gamma} \\
& D \ar[dl]_{d} \ar[rd]^{c} \\
D_0 && D_1
}}
\end{equation}
if:
\begin{enumerate}
    \item the pair \( (p_1, p_2) \) is jointly monic;
    \item the spans \( (D, d, c) \) and \( (E, p_1, p_2) \) belong to \( \M \);
    \item the following identities hold:
    \begin{align*}
    & p_1 e_1 = 1_A, \quad p_2 e_2 = 1_C, \\
    & (e_1 p_1)(e_2 p_2) = (e_2 p_2)(e_1 p_1), \\
    & \alpha p_1 (e_2 p_2) = \gamma p_2 (e_1 p_1), \\
    & d \alpha p_1 = d \alpha p_1 (e_2 p_2), \quad \\ & c \gamma p_2 = c \gamma p_2 ( e_1 p_1),
    \end{align*}
\end{enumerate}
 then there exists a unique morphism \( m \colon E \to D \) such that:
\begin{align}
m e_1 = \alpha, \quad
m e_2 = \gamma, \quad
dm =d\gamma p_2, \quad cm =c\alpha p_1.    \end{align}

\item[(H)] For every diagram in \( \C \) of the form
\begin{equation}\label{diag: dikite (d,c)}
\vcenter{\xymatrix@!0@=4em{
A \ar@<.5ex>[r]^-{f} \ar[rd]_-{\alpha} 
& B \ar[d]^-{\beta}
\ar@<.5ex>[l]^-{r}
\ar@<-.5ex>[r]_-{s} & 
C \ar@<-.5ex>[l]_-{g} \ar[ld]^-{\gamma} \\
& D \ar[dl]_{d} \ar[rd]^{c} \\
D_0 && D_1
}}
\end{equation}
if:
\begin{align}
& fr=1_B=gs\\
& \alpha r=\beta=\gamma s\\
& d\alpha=d\beta f\\
& c\beta g=c\gamma
\end{align}
and \[ (D,d,c)\in \M\]
 then there exists a unique morphism \[ m \colon A\times_B C \to D \] such that:
\begin{align*}
& m \langle 1_A, sf \rangle = \alpha, \quad \\
& m \langle rg,1_C \rangle = \gamma,\quad \\
& dm =d\gamma \pi_2 ,\quad \\ 
& cm =c\alpha \pi_1.    \end{align*}


\item[(I)] The forgetful functor $\mathbf{PseudoGrpd}(\C,\M)\to\Span(\C,\M)$ has a section, and, moreover, if $(D,d,c)$ is a span in $\M$ and 
\begin{equation}\label{split square main intro}
\xymatrix@!0@=4em{
E \ar@<.5ex>[r]^-{p_2} \ar@<-.5ex>[d]_-{p_1} & C \ar@<.5ex>[l]^-{e_2} \ar@<-.5ex>[d]_-{g} \\
A \ar@<.5ex>[r]^-{f} \ar@<-.5ex>[u]_-{e_1} & B \ar@<.5ex>[l]^-{r} \ar@<-.5ex>[u]_-{s}.
}
\end{equation}
is a commutative split square in $\C$ then for every morphism $$m\colon{E\to D}$$ with $dm=dme_2p_2$, $cm=cme_1p_1$, there exists a unique morphism $$\overline{m}\colon{A\times_B C\to D}$$ such that 
\begin{align*}
\overline{m}\langle 1_A,sf\rangle=me_1\\
\overline{m}\langle rg,1_C\rangle=me_2\\
d\overline{m}=dme_2\pi_2\\
c\overline{m}=cme_1\pi_1.
\end{align*}

\item[(J)] Every span $(D,d,c)\in \M$ is equipped with a morphism
\begin{align*}
p_D\colon{D(d,c)\to D}
\end{align*}
 such that given any three parallel morphisms $x,y,z\colon{Z\to D}$ with $dx=dy$ and $cy=cz$ the following conditions hold \begin{align*}
dp_D\langle x,y,z\rangle=dz,\quad cp_D\langle x,y,z\rangle=cx,\\
p_D\langle x,y,y\rangle=x ,\quad p_D\langle y,y,z\rangle=z,
\end{align*}
and moreover, given any commutative diagram of the form 
\begin{align*}
\xymatrix{D_0 \ar[d]_{f_0} & D \ar[d]^{f} \ar[l]_{d} \ar[r]^{c} & D_1\ar[d]^{f_1}\\D'_0  & D' \ar[l]_{d'} \ar[r]^{c'} & D'_1}
\end{align*} 
with $(D',d',c')\in \M$, the square
\begin{align*}
\xymatrix{D(d,c)\ar[r]^-{p_D}\ar[d]_{f\times_{f_0} f\times_{f_1} f} & D\ar[d]^{f}\\
D'(d',c') \ar[r]^-{p'_{D'}} & D'}
\end{align*}
is a commutative square. 

\item[(K)] The forgetful functor $\PreGrpd(\C,\M)\to\Span(\C,\M)$ is an isomorphism;
\item[(L)] The forgetful functor $\KockGrpd(\C,\M)\to\Span(\C,\M)$ has a section;
\item[(M)] The forgetful functor $\KockGrpd(\C,\M)\to\Span(\C,\M)$ is an isomorphism;
\item[(N)] The forgetful functor $\MKite(\C,\M)\to\Kite(\C,\M)$ has a section;
\item[(O)] The forgetful functor $\MKite(\C,\M)\to\Kite(\C,\M)$ is an isomorphism;

\end{enumerate}

\end{theorem}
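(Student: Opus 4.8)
The plan is to establish the cycle of implications among all fifteen conditions (A)--(O) by leveraging the already-proven Theorem~\ref{thm Main Lemma}, which settles the equivalence of (A), (B), (D), (F), (G), (H), and the associative-pregroupoid formulation. My strategy is to treat Theorem~\ref{thm Main Lemma} as a fixed core and then attach the remaining conditions---the isomorphism strengthenings (C), (E), (K), (M), (O), the pseudogroupoid condition (I), the bare pregroupoid section (J)/(K), and the kite-level conditions (L), (N)---by a web of short implications, each reducing to naturality or uniqueness arguments already available via Proposition~\ref{prop: particular cases}.

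First I would dispose of the ``isomorphism versus section'' upgrades. For each forgetful functor in question (on $\Cat$, $\MG$, $\KockGrpd$, $\PreGrpd$, $\MKite$), the functor is always faithful and injective on objects by construction, so the existence of a section automatically forces it to be an isomorphism: a section picks out, for each object of the base, a canonical lift, and the uniqueness clauses built into each structure (unital multiplicativity, associativity, the pregroupoid identities) guarantee this lift is the only one, whence the section is a two-sided inverse. Thus the pairs $(C)\!\Leftrightarrow\!(D)$, $(E)\!\Leftrightarrow\!(F)$, $(K)\!\Leftrightarrow\!(J)$, $(L)\!\Leftrightarrow\!(M)$, and $(N)\!\Leftrightarrow\!(O)$ collapse, and I can route everything through the ``section'' versions already handled in Theorem~\ref{thm Main Lemma}. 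The kite-level conditions (N)/(O) connect to (G)/(H) directly through item~(8) of Proposition~\ref{prop: particular cases}, which exhibits the reflection $\Kite\rightleftarrows\Span$ and identifies admissible kites with multiplicative ones; a section of $\MKite\to\Kite$ is exactly the assignment of a multiplication to every kite whose direction lies in $\M$, which is condition~(H).

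The genuinely new content is condition~(I), the pseudogroupoid formulation, and here the hypothesis that the box construction is defined on $\M$ becomes essential. I would show $(J)\Rightarrow(I)$ by feeding the pregroupoid operation $p_D$ into the box object: given the four projections $q_1,q_2,q_3,q_4\colon D(\square)\to D$, set $m=p_D\langle q_1,q_2,q_3\rangle$ and verify the Mal'tsev identities, the domain/codomain conditions, and crucially the independence clause $m\langle x,y,z,w\rangle=m\langle x,y,z,w'\rangle$---the last following because $m$ factors through the projection forgetting $w$. For the converse $(I)\Rightarrow(J)$, I would restrict a pseudogroupoid structure along the canonical comparison $D(d,c)\to D(\square)$ (using the diagonal to supply the fourth coordinate) to recover $p_D$, then use the second, split-square half of~(I) to extract naturality. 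The commutative-split-square clause of~(I) is precisely a reformulation of the compatibility needed to make the resulting family of operations natural, and matching it against the naturality square in~(J) is where the argument must be driven carefully.

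The main obstacle I anticipate is exactly this interplay between the box construction and the kernel-pair construction in condition~(I): one must check that the extra requirement imposed in Definition~\ref{def: basics} item~(2)---that the projection $\langle x,y,z,w\rangle\mapsto\langle x,y,z\rangle$ admit a kernel pair---is enough to transfer a natural operation on box objects to a natural operation on kernel-pair objects, and conversely, without assuming finite limits. The independence identity, which has no analogue in the plain pregroupoid setting, is the delicate point, since it encodes that the operation does not see the fourth coordinate, and verifying that this survives the passage in both directions (and is compatible with the split-square naturality) will require the careful diagram chases that I will organize around the universal property of $D(\square)$ rather than around any assumed product $D_0\times D_1$.
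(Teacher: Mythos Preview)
Your proposal has a genuine gap in the ``isomorphism versus section'' step. The forgetful functors $\MG(\C,\M)\to\RG(\C,\M)$, $\Cat(\C,\M)\to\RG(\C,\M)$, $\PreGrpd(\C,\M)\to\Span(\C,\M)$, etc.\ are \emph{not} injective on objects by construction: a reflexive graph can carry several distinct unital multiplications (already in $\mathbf{Set}$, a one-object reflexive graph with two non-identity loops supports many monoid structures), and a span can carry several pregroupoid operations. The axioms you call ``uniqueness clauses'' (unitality, associativity, the Mal'tsev identities) are constraints on a given structure, not uniqueness statements about the structure itself. Consequently the implications $(D)\Rightarrow(C)$, $(F)\Rightarrow(E)$, $(J)\Rightarrow(K)$, $(L)\Rightarrow(M)$, $(N)\Rightarrow(O)$ are not one-liners; the paper does not attempt them directly but routes everything through $(H)$, whose very formulation contains a uniqueness assertion, and then derives the isomorphism statements $(A),(C),(E),(K),(M),(O)$ from $(H)$ via Proposition~\ref{prop: particular cases}. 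The uniqueness you need is genuinely the content of the $(F)\Rightarrow(G)$ argument in Theorem~\ref{thm Main Lemma}, where naturality of the section is exploited to force $m=\midop_D\circ\mu_D\circ\theta$; it is not a formal consequence of faithfulness.

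There is a second, more localized problem in your $(I)\Rightarrow(J)$ sketch. You propose to ``restrict a pseudogroupoid structure along the canonical comparison $D(d,c)\to D(\square)$, using the diagonal to supply the fourth coordinate''. But no such comparison exists in general: an element $\langle x,y,z\rangle$ of $D(d,c)$ requires a fourth coordinate $w$ with $dw=dz$ and $cw=cx$, and there is no canonical candidate (the diagonal does not help, since $x$ and $z$ need not share domain or codomain). The paper avoids this by using the \emph{second} clause of $(I)$---the split-square statement---applied to the kernel-pair square for $(D,d,c)$, which produces $\overline{m}\colon D(d,c)\to D$ directly; it then shows, via a split-coequalizer argument on $D(\underline{\square})\rightrightarrows D(\square)\to D(d,c)$, that $m=\overline{m}\langle q_1,q_2,q_3\rangle$, from which naturality of $p_D=\overline{m}$ is read off the naturality of the pseudogroupoid section. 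Your proposal inverts the logical dependence between the two halves of $(I)$ and thereby misses the mechanism that actually transports the operation from $D(\square)$ down to $D(d,c)$.
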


\begin{proof}
The proof is asserted by establishing the following implications:
\begin{align*}
\xymatrix{&&&& H\ar[lllld]\ar[rrrrd]\ar[rd]\\
A\ar[r]\ar[d]&C\ar[r]\ar[d]& E\ar[d] & G\ar[ur] && I\ar[r]\ar[rd]&K\ar[d]&M\ar[l]\ar[d]&O\ar[l]\ar[d]\\
B\ar[r]&D\ar[r]&F\ar[ur]&&&& J\ar[llll]&L\ar[l]&N.\ar[l]}
\end{align*}
The implications in
\begin{align*}
\xymatrix{
A\ar[r]\ar[d]&C\ar[r]\ar[d]& E\ar[d] \\
B\ar[r]&D\ar[r]&F}
\end{align*}\text{and}
\begin{align*}
\xymatrix{
K\ar[d]&M\ar[l]\ar[d]&O\ar[l]\ar[d]\\
 J&L\ar[l]&N\ar[l]}
\end{align*}
\end{proof}
are clear. Let us prove:
\begin{enumerate}
\item[(H,I)] We have already seen that condition (H) implies that the forgetful functor $\PreGrpd(\C,\M)\to \Span(\C,\M)$ has a section, hence, in particular, $\mathbf{PseudoGrpd}(\C,\M)\to \Span(\C,\M)$ has a section as well. Furthermore, given any span $(D,d,c)\in \M$ and a commutative square as illustrated in condition (I), together with $m\colon{E\to D}$ satisfying $dm=dme_2p_2$ and $cm=cme_1p_1$ we can build the dikite diagram
\begin{equation}\label{diag: dikite (d,c) proof of ext main thm}
\vcenter{\xymatrix@!0@=4em{
A \ar@<.5ex>[r]^-{f} \ar[rd]_-{me_1} 
& B \ar[d]^-{\beta}
\ar@<.5ex>[l]^-{r}
\ar@<-.5ex>[r]_-{s} & 
C \ar@<-.5ex>[l]_-{g} \ar[ld]^-{me_2} \\
& D \ar[dl]_{d} \ar[rd]^{c} \\
D_0 && D_1
}}
\end{equation}
in which:
\begin{align}
& fr=1_B=gs\\
& me_1 r=\beta=me_2 s\\
& dme_1=d\beta f\\
& c\beta g=cme_2
\end{align}
and  then, by admissibility, there exists a unique morphism \[ m \colon A\times_B C \to D \] such that:
\begin{align*}
& m \langle 1_A, sf \rangle = me_1, \quad \\
& m \langle rg,1_C \rangle = me_2,\quad \\
& dm =dme_2 \pi_2 ,\quad \\ 
& cm =cme_1 \pi_1.    \end{align*}

 \item[(I,K,J)] 
Given $(D,d,c)\in \M$, consider $m\colon D(\square)\to D$, its associated pseudogroupoid. 

Next, consider the commutative split square
\begin{equation}\label{split square main ext  them proof}
\xymatrix@!0@=4em{
D(\square) \ar@<.5ex>[r]^-{p_2} \ar@<-.5ex>[d]_-{p_1} & D(c) \ar@<.5ex>[l]^-{e_2} \ar@<-.5ex>[d]_-{c_1} \\
D(d) \ar@<.5ex>[r]^-{d_2} \ar@<-.5ex>[u]_-{e_1} & D \ar@<.5ex>[l]^-{\Delta} \ar@<-.5ex>[u]_-{\Delta}
}
\end{equation}
From this square, we obtain a morphism
\[
\overline{m}\colon D(d,c)\to D,
\]
which defines a unique pregroupoid structure on the span $(D,d,c)$. In other words, every span $(D,d,c)\in \M$ is equipped with a unique pregroupoid structure, denoted $p_D = \overline{m}$.

To prove the naturality of each $p_D$, observe that the following diagram is a split coequalizer:
\begin{align*}
\xymatrix{
D(\underline{\square}) \ar@<.5ex>[r]^{r_1} \ar@<-.5ex>[r]_{r_2} 
& D(\square) \ar[r]^{\langle q_1,q_2,q_3\rangle} \ar@/_2pc/[l]_{t} 
& D(d,c) \ar@/_2pc/[l]_{s}
}
\end{align*}
Here, $r_1,r_2\colon D(\underline{\square})\to D(\square)$ form the kernel pair of $\langle q_1,q_2,q_3\rangle$, which is assumed to exist. 

Intuitively, the object $D(\underline{\square})$ consists of tuples $\langle x,y,z,w,w'\rangle$, as illustrated by
\[
\xymatrix{
\cdot \ar[r]^{y} \ar[d]_{x} & \cdot \\
\cdot & \cdot \ar@<.5ex>@{-->}[l]^{w'} \ar@<-.5ex>@{-->}[l]_{w} \ar[u]_{z}
}
\]

Finally, the maps $t$ and $s$ are defined on generalized elements by
\begin{align*}
s\langle x,y,z\rangle &= \langle x,y,z,\overline{m}\langle x,y,z\rangle\rangle,\\
t\langle x,y,z,w\rangle &= \langle x,y,z,w,\overline{m}\langle x,y,z\rangle\rangle.
\end{align*}

Now, since $m$ satisfies $mr_1=mr_2$ (the pseudogroupoid structure $m$ is independent of its fourth variable, see Definition \ref{def: basics}), 
there exists a unique $\widetilde{m}\colon D(d,c)\to D$ such that $\widetilde{m}\langle q_1,q_2,q_3\rangle = m$. 

By the uniqueness property that defines $\overline{m}$, we have $\widetilde{m} = \overline{m}$, and therefore 
$m = \overline{m}\langle q_1,q_2,q_3\rangle$ as well.

The fact that $\langle q_1,q_2,q_3\rangle\colon D(\square)\to D(d,c)$ is an epimorphism, together with the commutativity of the outer square in the diagram
\begin{align*}
\xymatrix{D(\square) \ar[d]_{f^4}\ar@/^2pc/[rr]^{m}\ar[r]^{\langle q_1,q_2,q_3\rangle} & D(d,c)\ar[d]_{f^3} \ar[r]^{\overline{m}} & D \ar[d]_{f}\\
D'(\square) \ar@/_2pc/[rr]_{m'}\ar[r]^{\langle q'_1,q'_2,q'_3\rangle} & D'(d',c') \ar[r]^{\overline{m'}} & D' }
\end{align*}
gives the desired naturality of $p_D = \overline{m}$.

\item[(F,G,H)] Has been established in Section \ref{sec: proof of main thm}, as well as (J,F).


\item[(H,A)] Follows from Proposition \ref{prop: particular cases}, as well as (H,O).

\end{enumerate}

\section{A few further applications}\label{section: applications}

In this last section we outline a few simple applications that are consequence of the extended main theorem.

Weakly Mal'tsev categories generalize Mal'tsev categories, capturing situations in which not every internal category forms a groupoid. This occurs, for example, in the categories of preordered groups, distributive lattices, or commutative monoids with cancellation \cite{MF37,MF26,MF9,MF9a,MF24,MF47,MF2025}.

\begin{proposition}\label{prop: WMC}
Let $\C$ be a category with pullbacks of split epimorphisms along split epimorphisms and with equalizers. Let $\M$ be a class of spans in $\C$ that is closed under the kernel pair construction, contains the span part of every local product, and is stable under restriction along regular monomorphisms, in the sense that for every $(D,d,c)\in \M$ and every regular monomorphism $k\colon E\to D$, the restricted span $(E,dk,ck)$ is also in $\M$.

The Lawvere condition holds in $\C$ with respect to $\M$, if and only if  $\C$ is a weakly Mal'tsev category.
\end{proposition}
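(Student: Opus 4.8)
The plan is to prove the two implications separately, after translating each side into a workable form. By Theorem~\ref{thm Main} the relative Lawvere condition with respect to $\M$ is equivalent to the admissibility of every dikite diagram whose direction span lies in $\M$ (item~(5)), while weak Mal'tsevness asserts, by definition, that the cospan $(e_1,e_2)$ of every local product is jointly epic. These are the two endpoints I would connect, keeping Corollary~\ref{cor:weakly Mal'tsev} in reserve to supply the strong-relation case.

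For the implication from the Lawvere condition to weak Mal'tsevness I would run the equalizer argument already sketched in the weakly Mal'tsev subsection, now making explicit where the restriction hypothesis enters. Given a local product $L=A\times_B C$ and a pair $u,v\colon L\to Z$ with $ue_1=ve_1$ and $ue_2=ve_2$, form the equalizer $k=\mathrm{eq}(u,v)\colon K\to L$ together with the induced factorizations $\overline{e_1}\colon A\to K$ and $\overline{e_2}\colon C\to K$. Since a local product is a local product in both orientations, both $(L,p_1,p_2)$ and $(L,p_2,p_1)$ belong to $\M$; restriction along the regular monomorphism $k$ then places $(K,p_2k,p_1k)$ in $\M$. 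Admissibility of the resulting dikite yields a morphism $m\colon L\to K$, and applying the uniqueness clause of admissibility to the canonical self-dikites of $L$ and of $K$ (whose directions lie in $\M$) forces $km=1_L$ and $mk=1_K$. Thus $k$ is an isomorphism, $u=v$, and $(e_1,e_2)$ is jointly epic. The load-bearing hypothesis here is stability under restriction along regular monomorphisms.

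For the converse I would again aim at dikite admissibility for a direction span $(D,d,c)\in\M$. Uniqueness of the multiplication is immediate: since $\C$ is weakly Mal'tsev the pair $(e_1,e_2)$ is jointly epic, and the equations $me_1=\alpha$, $me_2=\gamma$ already pin down $m$. Existence is to be obtained from the diagonal-filler principle of the weakly Mal'tsev subsection, namely that a jointly epic cospan is orthogonal to a jointly strongly monic span. The step that requires care is that this principle is phrased for strong relations, whereas $(D,d,c)$ need not be jointly strongly monic; the plan is to reduce to that case using the closure hypotheses on $\M$, exploiting that every local product is already a strong relation (the comparison into $A\times C$ is a regular, hence strong, monomorphism) and that restriction together with the kernel-pair construction keep the auxiliary spans inside $\M$.

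The main obstacle is precisely this last reduction: manufacturing the multiplication for a direction span in $\M$ that fails to be jointly strongly monic. The first implication and the uniqueness half of the converse are comparatively routine once restriction stability is available, but the existence half is where the three closure conditions on $\M$ must be made to interlock so as to fall back on the strong-relation situation governed by Corollary~\ref{cor:weakly Mal'tsev}. I would expect the bulk of the genuinely delicate verification to reside there, in particular in checking that the spans produced by the reduction do lie in $\M$ in the required orientations.
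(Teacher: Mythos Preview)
Your treatment of the implication from the relative Lawvere condition to weak Mal'tsevness is essentially the argument the paper gives in the weakly-Mal'tsev subsection of Section~4, adapted to a general $\M$ by invoking restriction stability to place $(K,p_2k,p_1k)$ inside $\M$; the paper supplies no separate proof of Proposition~\ref{prop: WMC}, so this is the intended route. One small simplification: you do not need admissibility of auxiliary self-dikites to obtain $km=1_L$. The admissibility data for $m$ already yield $p_1(km)=\pi_1$ and $p_2(km)=\pi_2$ directly from the conditions $dm=d\gamma\pi_2$ and $cm=c\alpha\pi_1$ with $d=p_2k$, $c=p_1k$; since $(p_1,p_2)$ is jointly monic (it is a pullback), $km=1_L$ follows, and then $mk=1_K$ because $k$ is a monomorphism.

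Your hesitation about the converse is well founded, and the obstacle you isolate cannot in fact be overcome: the implication ``$\C$ weakly Mal'tsev $\Rightarrow$ Lawvere condition for $\M$'' is false for a general class $\M$ satisfying the three closure hypotheses. Take $\C$ to be the category of commutative cancellative monoids---weakly Mal'tsev according to the references in Section~\ref{section: applications}, and finitely complete---and let $\M$ be the class of all spans. All three closure conditions hold trivially, yet the Lawvere condition for all spans would force $\C$ to be naturally Mal'tsev by Corollary~\ref{cor:natMal'tsev}, which it is not: the monoid $(\mathbb{N},+)$ admits no Mal'tsev homomorphism, since $p(0,1,1)=0$ together with $p(0,0,1)=1$ would require $p(0,1,0)=-1$. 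Hence no reduction of the kind you sketch can succeed. The proposition as written is over-general; the paper's ``as a consequence'' paragraph only applies it to the \emph{smallest} class with the listed properties, which under finite limits coincides with the class of strong relations, and for that class both directions are already covered by Corollary~\ref{cor:weakly Mal'tsev} and the argument of Section~4.1.
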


As a consequence, weakly Mal'tsev categories can be characterized by taking \(\M\) to be the smallest class of spans containing all exact spans and closed under restriction along regular monomorphisms. Here, exact spans are those that admit a pushout and are the pullback of that pushout.

When all finite limits exist, this smallest class coincides precisely with the class of strong relations. Hence, weakly Mal'tsev categories can be seen as the broadest context in which the characteristic properties of Mal'tsev-like categories are preserved.

On the other hand, it remains open whether other types of classes may arise—situations in which not all spans are relations, and not all induced cospans into local products are jointly epic, but where the interaction between spans and cospans uniquely determines morphisms between them. More precisely, instead of requiring a jointly epic cospan \((e_1, e_2)\) or a jointly monic span \((d, c)\), one might have both cooperating in such a way that, for any \(u, v \colon E \to D\), the conditions
\[
ue_1 = ue_2, \quad ve_1 = ve_2, \quad du = dv, \quad \text{and} \quad cu = cv
\]
together imply \(u = v\).


Another consequence of the relative Lawvere condition is a simplification in the characterization of local products.

\begin{proposition}\label{prop: WMC local prods}
Let $\C$ be a category with pullbacks of split epimorphisms along split epimorphisms, and let $\M$ be a class of spans in $\C$ that is closed under the kernel pair construction and contains the span part of every local product.

If the Lawvere condition holds in $\C$ with respect to $\M$, then a diagram in $\C$ of the form
\begin{equation}\label{local product 0.2}
\vcenter{\xymatrix{
A \ar@<-.5ex>[r]_-{e_1} & E \ar@<-.5ex>[l]_-{p_1} \ar@<.5ex>[r]^-{p_2} & C \ar@<.5ex>[l]^-{e_2}
}}
\end{equation}
is a local product if and only if the following conditions hold:
\begin{enumerate}
\item \( p_1 e_1 = 1_A \) and \( p_2 e_2 = 1_C \);
\item \( e_1 p_1 e_2 p_2 = e_2 p_2 e_1 p_1 \);
\item the pair \( (p_1, p_2) \) is jointly monic;
\item the span $(E,p_1,p_2)$ belongs to $\M$;
\item the pullback of \( e_1 \) and \( e_2 \) exists.
\end{enumerate}
\end{proposition}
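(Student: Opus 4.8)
The plan is to prove the two directions separately, and to note at the outset that only the ``if'' direction actually invokes the Lawvere condition; the ``only if'' direction is formal.

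For the forward direction, suppose the split span is a local product, witnessed by $f,g,r,s$ and a base $B$ with $E = A\times_B C$. Conditions (1) and (2) hold because a local product is by definition a commutative split span, and condition (4) is the standing hypothesis that $\M$ contains the span part of every local product. Condition (3) holds because the projections of any pullback are jointly monic, directly from the universal property of the pullback (no products are needed). For condition (5) I would show that $B$, equipped with $r$ and $s$, \emph{is} the pullback of $e_1$ and $e_2$: one checks $e_1 r = e_2 s = \langle r,s\rangle$ from $fr = 1_B = gs$, and verifies the universal property by sending a compatible pair $(a,c)$ (with $e_1 a = e_2 c$) to $gc$, uniqueness coming from $gs = 1_B$. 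Hence the pullback of $e_1,e_2$ exists and equals $B$.

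For the reverse direction, assume (1)--(5). Set $B$ to be the pullback of $e_1,e_2$ (which exists by (5)), with projections $r,s$, so that $e_1 r = e_2 s$ and $(r,s)$ is jointly monic. I would define $f\colon A\to B$ and $g\colon C\to B$ as the unique factorizations through $B$ determined by the component pairs
\[
(rf,\,sf) = (p_1 e_2 p_2 e_1,\ p_2 e_1), \qquad (rg,\,sg) = (p_1 e_2,\ p_2 e_1 p_1 e_2).
\]
These pairs factor through $B$ (i.e.\ agree after composing with $e_1$ and $e_2$) by routine diagram chases using the commutative-split-span identity $e_1 p_1 e_2 p_2 = e_2 p_2 e_1 p_1$ together with $p_1 e_1 = 1_A$ and $p_2 e_2 = 1_C$. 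The same identity yields $fr = 1_B = gs$ and $f p_1 = g p_2$, each verified by composing with $r$ and $s$ and invoking the joint monicity of $(r,s)$.

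It then remains to show that $(E,p_1,p_2)$ is the pullback of $f$ and $g$. Uniqueness of comparison maps is immediate from the joint monicity of $(p_1,p_2)$ in condition (3). Existence is the one place the Lawvere condition enters: since $(E,p_1,p_2)\in\M$ and the Lawvere condition holds with respect to $\M$, the span carries its canonical pregroupoid operation $p_E\colon E(p_1,p_2)\to E$. Given $a\colon T\to A$ and $c\colon T\to C$ with $fa = gc$, unwinding this equation through $r,s$ gives $p_2 e_1 a = p_2 e_1 p_1 e_2 c$, which is precisely the domain condition making $\langle e_2 c,\ e_1 p_1 e_2 c,\ e_1 a\rangle$ a generalized element of $E(p_1,p_2)$. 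Setting $w := p_E\langle e_2 c,\ e_1 p_1 e_2 c,\ e_1 a\rangle$, the pregroupoid identities deliver $p_1 w = p_1 e_1 a = a$ and $p_2 w = p_2 e_2 c = c$. Thus $E = A\times_B C$, and $e_1 = \langle 1_A, sf\rangle$, $e_2 = \langle rg, 1_C\rangle$ follow from $p_1 e_1 = 1_A$, $p_2 e_1 = sf$ (and dually) by joint monicity, so the split span is a local product.

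The main obstacle is exactly the construction of the comparison morphism $w$; everything else reduces to formal bookkeeping driven by the single commutativity law. The conceptual point is that the canonical Mal'tsev-type operation on $E$, guaranteed by the relative Lawvere condition, performs precisely the ``gluing'' of $e_2 c$ and $e_1 a$ required to realize $E$ as the pullback of $f$ and $g$. I would close by remarking that the forward direction is entirely independent of the Lawvere condition, so the substance of the proposition lies in recognizing (1)--(5) as \emph{sufficient} to reconstruct the local product.
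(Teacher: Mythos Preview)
Your proof is correct and follows essentially the same strategy as the paper's: both directions reduce the converse to producing, for any compatible pair $(a,c)$, a unique $w\colon T\to E$ with $p_1 w=a$ and $p_2 w=c$, and both obtain $w$ from the relative Lawvere condition. The paper outsources the construction of $B$, $f$, $g$ to Proposition~1 of \cite{MF2025} and leaves the construction of $w$ implicit (``this follows from the relative Lawvere condition''), whereas you carry out both steps explicitly---building $B$ as the pullback of $e_1,e_2$, defining $f,g$ componentwise, and writing down the pregroupoid formula $w=p_E\langle e_2 c,\, e_1 p_1 e_2 c,\, e_1 a\rangle$; this makes your argument self-contained while the paper's is terser but relies on an external reference.
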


\begin{proof}
If the diagram is a local product, then by Definition \ref{def: basics} item (8), all the conditions are satisfied. In particular, the span $(E,p_1,p_2)$ belongs to $\M$ by hypothesis. 

Conversely, using again Definition \ref{def: basics} item (8), as well as Proposition 1 in \cite{MF2025}, it suffices to verify that for every pair of morphisms \( u\colon Z \to A \) and \( v\colon Z \to C \), if
\begin{align*}
p_1 e_2 p_2 e_1 u &= p_1 e_2 v, \\
p_2 e_1 u &= p_2 e_1 p_1 e_2 v,
\end{align*}
then there exists a unique morphism \( w\colon Z \to E \) such that \( p_1 w = u \) and \( p_2 w = v \). This follows from the relative Lawvere condition.
\end{proof}

Another application is to identify the conditions under which a canonical pregroupoid structure on a span in \(\M\) is autonomous (see Definition~\ref{def: basics}, item~(15)).

\begin{proposition}\label{prop: autonomous}
Let \(\C\) be a category with pullbacks of split epimorphisms along split epimorphisms, and let \(\M\) be a class of spans in \(\C\) that is closed under the kernel pair construction and contains the span part of every local product.

Suppose the Lawvere condition holds in \(\C\) with respect to \(\M\). A span \((D,d,c) \in \M\) is \emph{autonomous}—that is, the canonical induced pregroupoid \((D,d,c,p_D)\) is autonomous—whenever the products \(D_0 \times D_0\) and \(D_1 \times D_1\) exist in \(\C\), and not only the span \((D(d,c), \dom, \cod) \in \M\) but also the span \((D(d,c), \overline{d}, \overline{c})\) belongs to \(\M\), where
\begin{align*}
\overline{d} &= \langle d \circ \dom,\, d \circ \cod \rangle \colon D(d,c) \to D_0 \times D_0,\\
\overline{c} &= \langle c \circ \dom,\, c \circ \cod \rangle \colon D(d,c) \to D_1 \times D_1.
\end{align*}
\end{proposition}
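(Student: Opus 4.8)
The plan is to reduce the autonomy condition to an instance of the naturality of the canonical pregroupoid structure $p_D$, exactly as the associativity law was obtained in item (J) of Theorem~\ref{thm Main extended}. Recall that autonomy (Definition~\ref{def: basics}, item~(15)) is the assertion that for nine parallel morphisms $x_i,y_i,z_i\colon Z\to D$ satisfying the displayed row-and-column compatibility conditions, the two iterated applications of $p_D$ agree:
\begin{align*}
p_D\langle p_D\langle x_1,x_2,x_3\rangle,\,p_D\langle y_1,y_2,y_3\rangle,\,p_D\langle z_1,z_2,z_3\rangle\rangle
= p_D\langle p_D\langle x_1,y_1,z_1\rangle,\,p_D\langle x_2,y_2,z_2\rangle,\,p_D\langle x_3,y_3,z_3\rangle\rangle.
\end{align*}
The essential observation is that both sides are computed by applying $p_D$ to a $3\times 3$ array of elements of $D$ and that the two expressions differ only by reading the array by rows versus by columns; so the equality should follow from a \emph{naturality} statement relating $p_D$ to $p_{D(d,c)}$ along a suitable morphism of spans.

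First I would set up the two auxiliary spans on the object $D(d,c)$. Alongside the span $(D(d,c),\dom,\cod)$ coming from the kernel pair construction, the statement singles out a second span $(D(d,c),\overline d,\overline c)$ with $\overline d=\langle d\,\dom,\,d\,\cod\rangle$ and $\overline c=\langle c\,\dom,\,c\,\cod\rangle$ into $D_0\times D_0$ and $D_1\times D_1$; the hypothesis guarantees both of these belong to $\M$. The point of the second span is that a triple of elements of $D(d,c)$ that is composable \emph{with respect to $(\overline d,\overline c)$} is precisely a $3\times 3$ array of elements of $D$ whose rows are triples composable for $(d,c)$ and whose columns are also composable for $(d,c)$ — which is exactly the data in the autonomy diagram. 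Since the Lawvere condition holds with respect to $\M$, the span $(D(d,c),\overline d,\overline c)\in\M$ carries its own canonical pregroupoid operation $p_{(D(d,c),\overline d,\overline c)}$, and by the uniqueness half of the Lawvere condition this operation must coincide with the pointwise application of $p_D$ to rows (equivalently, to columns). I would verify that both candidate operations — ``apply $p_D$ rowwise'' and ``apply $p_D$ columnwise'' — satisfy the four defining identities of a pregroupoid structure on the span $(D(d,c),\overline d,\overline c)$, which uses only the Mal'tsev identities $p_D\langle x,y,y\rangle=x$, $p_D\langle y,y,z\rangle=z$ together with the domain/codomain equations.

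The second step is to invoke naturality. Applying $p_D$ to the three ``big'' entries — $p_D\langle x_1,x_2,x_3\rangle$ and its companions, versus $p_D\langle x_1,y_1,z_1\rangle$ and its companions — is an instance of the naturality square of item~(J) applied to the projection morphisms of spans from $(D(d,c),\overline d,\overline c)$ down to $(D,d,c)$. Concretely, the morphism of spans $\dom,\midop,\cod\colon (D(d,c),\overline d,\overline c)\to (D,d,c)$ fits into the commuting square of item~(J), so that $p_D\bigl(\text{applied to the }\dom,\midop,\cod\text{ components}\bigr)=\bigl(p_{(D(d,c),\overline d,\overline c)}\bigr)$ followed by the relevant projection; reading this square once for the row decomposition and once for the column decomposition, and using that both decompositions name the same operation $p_{(D(d,c),\overline d,\overline c)}$ by uniqueness, yields the two sides of the autonomy equation as equal. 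The existence of the products $D_0\times D_0$ and $D_1\times D_1$ is what allows $\overline d$ and $\overline c$ to be formed, and closure of $\M$ under the kernel pair construction is what allows $p_{(D(d,c),\overline d,\overline c)}$ to be recognized as a genuine pregroupoid operation natural in the required sense.

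I expect the main obstacle to be bookkeeping rather than conceptual: one must check that the compatibility conditions on the nine morphisms in the autonomy diagram translate \emph{exactly} into the single condition that the triple of columns (and separately the triple of rows) is composable for the span $(D(d,c),\overline d,\overline c)$, i.e.\ that $\overline d$ of the middle column equals $\overline d$ of the left column and $\overline c$ of the middle equals $\overline c$ of the right, and so on. Matching the index conventions in the array against the pairing conventions in $\overline d=\langle d\,\dom,d\,\cod\rangle$ and $\overline c=\langle c\,\dom,c\,\cod\rangle$ is where an error is most likely to creep in, and I would carry out that identification carefully before appealing to uniqueness. Once the two operations are seen to be pregroupoid structures on the \emph{same} span in $\M$, the uniqueness clause of the Lawvere condition forces them to agree, and the autonomy identity follows.
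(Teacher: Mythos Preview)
Your overall strategy---work with the auxiliary span \((D(d,c),\overline d,\overline c)\in\M\), use the uniqueness of its canonical pregroupoid structure, and finish with a naturality square from item~(J)---is exactly the paper's approach. However, there is a genuine gap in how you execute it.

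The claim that \emph{both} the ``rowwise'' and the ``columnwise'' applications of \(p_D\) define pregroupoid structures on \((D(d,c),\overline d,\overline c)\) is false. Write a generic element of \(D(d,c)(\overline d,\overline c)\) as a triple \((X_1,X_2,X_3)\) with \(X_i=\langle x_i,y_i,z_i\rangle\in D(d,c)\). The rowwise operation sends this to \(\langle p_D X_1,\,p_D X_2,\,p_D X_3\rangle\); applied to \((X,Y,Y)\) it yields \(\langle p_D X,\,p_D Y,\,p_D Y\rangle\), which is not \(X\) in general. So uniqueness cannot be used to identify rowwise with columnwise, and your sentence ``this operation must coincide with the pointwise application of \(p_D\) to rows (equivalently, to columns)'' does not go through. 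Only the columnwise operation \((X_1,X_2,X_3)\mapsto\langle p_D\langle x_1,x_2,x_3\rangle,\,p_D\langle y_1,y_2,y_3\rangle,\,p_D\langle z_1,z_2,z_3\rangle\rangle\) satisfies the Mal'tsev identities for \((\overline d,\overline c)\); uniqueness then identifies it with the canonical \(p_{(D(d,c),\overline d,\overline c)}\).

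The missing ingredient is that the rowwise operation is not a second pregroupoid structure but rather the morphism \(p_D\times p_D\times p_D\colon D(d,c)(\overline d,\overline c)\to D(d,c)\) induced on kernel pair constructions by the \emph{span morphism} \((p_D,\pi_2,\pi_1)\colon (D(d,c),\overline d,\overline c)\to (D,d,c)\). (One checks \(d\,p_D=\pi_2\,\overline d\) and \(c\,p_D=\pi_1\,\overline c\).) The naturality square of item~(J) along \emph{this} morphism---not along \(\dom,\midop,\cod\) as you suggest---reads
\[
p_D\circ p_{(D(d,c),\overline d,\overline c)} \;=\; p_D\circ(p_D\times p_D\times p_D),
\]
whose left-hand side is \(p_D\) applied to the columnwise result and whose right-hand side is \(p_D\) applied to the rowwise result. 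That is precisely the autonomy identity. Naturality along \(\dom,\midop,\cod\) is not wrong, but it only recovers the columnwise description of the canonical structure; it does not by itself produce the row-versus-column comparison.
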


\begin{proof}
Assuming that $(D(d,c),\overline{d},\overline{c})\in \M$ we can form the kernel pair construction and consequently consider the diagram
\begin{equation}\label{diag: kite prop autonomous}
\vcenter{\xymatrix@!0@=4em{
& D(d,c)(\overline{d},\overline{c}) 
\ar@<-.5ex>[dl]_-{p_1}
\ar@<.5ex>[dr]^-{p_2}
\ar@{-->}[dd]
\\
D(d,c)(\overline{d}) \ar@<-.5ex>[ru]_-{e_1} \ar[rd]_-{\overline{d}_1} 
&  & 
D(d,c)(\overline{c}) \ar@<.5ex>[lu]^-{e_2} \ar[ld]^-{\overline{c}_2} \\
& D(d,c) \ar[dl]_{\overline{d}} \ar[rd]^{\overline{c}} \\
D_0\times D_0 && D_1\times C_1
}}
\end{equation}
and observe that both morphisms
\begin{align*}
p_{D(d,c)}=\langle p_{D(d,c)}^{1}, p_{D(d,c)}^{2}, p_{D(d,c)}^{3}\rangle\\
\langle p_D\langle x_1,y_1,z_1\rangle,p_D\langle x_2,y_2,z_2\rangle,p_D\langle x_3,y_3,z_3\rangle\rangle
\end{align*}
fit into the diagram. Hence they must be equal. The naturality of $p_D$ and $p_{D(d,c)}$ completes the proof, as displayed
\begin{equation}\label{diag: p_D is natural }
\vcenter{\xymatrix{
D(d,c)(\overline{d},\overline{c}) \ar[dd]_{p_D^{3}}\ar@<0ex>[rr]^-{p_{D(d,c)}}  && D(d,c) \ar@<0ex>@{->}[dd]^-{p_D}  \ar[rr]^{\overline{c}}   \ar[rd]^{\overline{d}} && D_1\times D_1 \ar[dd]^{\pi_1}
\\& & & D_0\times D_0 \ar[dd]^(.3){\pi_2}
\\
D(c,d) \ar[rr]^-{p_D} && D  \ar[rr]^(.3){c} \ar[rd]_{d} && D_1
\\ & & & D_0
}}
\end{equation}
\end{proof}


If we take \(\M\) to be the class of all spans in \(\C\), the extended version of the main theorem yields a characterization of naturally Mal'tsev categories, now including a few additional equivalent conditions.

\begin{theorem}\label{thm Main naturally}
Let \(\C\) be a category with pullbacks of split epimorphisms along split epimorphisms, kernel pairs, and finite products. Then the following conditions are equivalent:

\begin{enumerate}
\item The category $\C$ is naturally Mal'tsev.
\item The Lawvere Condition holds in $\C$.
\item The functor $\Grpd(\C)\to\RG(\C)$ has a section;
\item The functor $\Cat(\C)\to\RG(\C)$ is an isomorphism;
\item The functor $\Cat(\C)\to\RG(\C)$ has a section;
\item The functor $\UMG(\C)\to\RG(\C)$ is an isomorphism;
\item The functor $\UMG(\C)\to\RG(\C)$ has a section;
\item The functor $\PreGrpd(\C)\to\Span(\C)$ is an isomorphism;
\item The functor $\PreGrpd(\C)\to\Span(\C)$ has a section;
\item The functor $\KockGrpd(\C)\to\Span(\C)$ is an isomorphism;
\item The functor $\KockGrpd(\C)\to\Span(\C)$ has a section;
\item For every diagram in \( \C \) of the form
\begin{equation}\label{diag: dikite (d,c) naturally}
\vcenter{\xymatrix@!0@=4em{
A \ar@<.5ex>[r]^-{f} \ar[rd]_-{\alpha} 
& B \ar[d]^-{\beta}
\ar@<.5ex>[l]^-{r}
\ar@<-.5ex>[r]_-{s} & 
C \ar@<-.5ex>[l]_-{g} \ar[ld]^-{\gamma} \\
& D \ar[dl]_{d} \ar[rd]^{c} \\
D_0 && D_1
}}
\end{equation}
if:
\begin{align}
& fr=1_B=gs\\
& \alpha r=\beta=\gamma s\\
& d\alpha=d\beta f\\
& c\beta g=c\gamma
\end{align}
 then there exists a unique morphism \[ m \colon A\times_B C \to D \] such that:
\begin{align*}
& m \langle 1_A, sf \rangle = \alpha, \quad \\
& m \langle rg,1_C \rangle = \gamma,\quad \\
& dm =d\gamma \pi_2 ,\quad \\ 
& cm =c\alpha \pi_1.    \end{align*}

\item For every diagram in \( \C \) of the form
\begin{equation}\label{diag: kite main thm}
\vcenter{\xymatrix@!0@=4em{
& E \ar@{-->}[dd]
\ar@<-.5ex>[dl]_-{p_1}
\ar@<.5ex>[dr]^-{p_2}
\\
A \ar@<-.5ex>[ru]_-{e_1} \ar[rd]_-{\alpha} 
&  & 
C \ar@<.5ex>[lu]^-{e_2} \ar[ld]^-{\gamma} \\
& D \ar[dl]_{d} \ar[rd]^{c} \\
D_0 && D_1
}}
\end{equation}
if:
\begin{enumerate}
    \item the pair \( (p_1, p_2) \) is jointly monic;
    \item the following identities hold:
    \begin{align*}
    & p_1 e_1 = 1_A, \quad p_2 e_2 = 1_C, \\
    & (e_1 p_1)(e_2 p_2) = (e_2 p_2)(e_1 p_1), \\
    & \alpha p_1 (e_2 p_2) = \gamma p_2 (e_1 p_1), \\
    & d \alpha p_1 = d \alpha p_1 (e_2 p_2), \quad \\ & c \gamma p_2 = c \gamma p_2 ( e_1 p_1),
    \end{align*}
\end{enumerate}
 then there exists a unique morphism \( m \colon E \to D \) such that:
\begin{align}
m e_1 = \alpha, \quad
m e_2 = \gamma, \quad
dm =d\gamma p_2, \quad cm =c\alpha p_1.    \end{align}


\item[(I)] The forgetful functor $\mathbf{PseudoGrpd}(\C)\to\Span(\C)$ has a section, and, moreover, if $(D,d,c)$ is a span in $\C$ and 
\begin{equation}\label{split square main intro}
\xymatrix@!0@=4em{
E \ar@<.5ex>[r]^-{p_2} \ar@<-.5ex>[d]_-{p_1} & C \ar@<.5ex>[l]^-{e_2} \ar@<-.5ex>[d]_-{g} \\
A \ar@<.5ex>[r]^-{f} \ar@<-.5ex>[u]_-{e_1} & B \ar@<.5ex>[l]^-{r} \ar@<-.5ex>[u]_-{s}.
}
\end{equation}
is a commutative split square in $\C$ then for every morphism $$m\colon{E\to D}$$ with $dm=dme_2p_2$, $cm=cme_1p_1$, there exists a unique morphism $$\overline{m}\colon{A\times_B C\to D}$$ such that 
\begin{align*}
\overline{m}\langle 1_A,sf\rangle=me_1\\
\overline{m}\langle rg,1_C\rangle=me_2\\
d\overline{m}=dme_2\pi_2\\
c\overline{m}=cme_1\pi_1.
\end{align*}

\item The induced comparison morphism $\langle p_1,p_2\rangle\colon{E\to A\times_{B} C}$ in every split square
\begin{equation}\label{split square nat}
\xymatrix@!0@=4em{
E \ar@<.5ex>[r]^-{p_2} \ar@<-.5ex>[d]_-{p_1} & C \ar@<.5ex>[l]^-{e_2} \ar@<-.5ex>[d]_-{g} \\
A \ar@<.5ex>[r]^-{f} \ar@<-.5ex>[u]_-{e_1} & B \ar@<.5ex>[l]^-{r} \ar@<-.5ex>[u]_-{s}.
}
\end{equation}
is a split epimorphism with a section $\sigma\colon{A\times_B C\to E}$ uniquely determined by $\sigma\langle 1_A,sf\rangle=e_1$ and $\sigma\langle rg,1_C\rangle=e_2$.

\item Every local product is a local coproduct.

\item Every span $(D,d,c)$ in $\C$ is equipped with a unique morphism
\begin{align*}
p_D\colon{D(d,c)\to D}
\end{align*}
 such that given any three parallel morphisms $x,y,z\colon{Z\to D}$ with $dx=dy$ and $cy=cz$ the following conditions hold \begin{align*}
dp_D\langle x,y,z\rangle=dz,\quad cp_D\langle x,y,z\rangle=cx,\\
p_D\langle x,y,y\rangle=x ,\quad p_D\langle y,y,z\rangle=z,
\end{align*}
and moreover, given any commutative diagram of the form 
\begin{align*}
\xymatrix{D_0 \ar[d]_{f_0} & D \ar[d]^{f} \ar[l]_{d} \ar[r]^{c} & D_1\ar[d]^{f_1}\\D'_0  & D' \ar[l]_{d'} \ar[r]^{c'} & D'_1}
\end{align*} 
 the induced square
\begin{align*}
\xymatrix{D(d,c)\ar[r]^-{p_D}\ar[d]_{f\times_{f_0} f\times_{f_1} f} & D\ar[d]^{f}\\
D'(d',c') \ar[r]^-{p'_{D'}} & D'}
\end{align*}
is commutative. 

In particular, given any nine parallel morphisms     $$ x_1,y_1,z_1, x_2,y_2,z_2,x_3,y_3,z_3\colon{  Z\to D
}$$
    with
    \begin{align*}
    dx_1=&dx_2  &  dx_1=&dy_1\\
    &cx_2=cx_3 && cy_1=cz_1\\
    dy_1=&dy_2  &  dx_2=&dy_2\\
    &cy_2=cy_3 && cy_2=cz_2\\
    dz_1=&dz_2  &  dx_3=&dy_3\\
    &cz_2=cz_3 && cy_3=cz_3
    \end{align*}
   then 
   \begin{align*}
      p_D\langle p_D\langle x_1,x_2,x_3\rangle,\;p_D\langle y_1,y_2,y_3\rangle,\;p_D\langle z_1,z_2,z_3\rangle\rangle=\\
      =p_D\langle p_D\langle x_1,y_1,z_1\rangle,\;p_D\langle x_2,y_2,z_2\rangle,\;p_D\langle x_3,y_3,z_3\rangle\rangle.
   \end{align*}

\end{enumerate}

\end{theorem}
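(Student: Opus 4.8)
The plan is to deduce this theorem from the extended main theorem (Theorem~\ref{thm Main extended}) by taking $\M$ to be the class of \emph{all} spans in $\C$. The first step is to confirm that this choice meets the standing hypotheses: every span lies in $\M$, so $\M$ is vacuously closed under the kernel pair construction and contains the span part of every local product, while the assumed kernel pairs and finite products guarantee the box construction for each span (cf.\ Definition~\ref{def: basics}, item~(3)). Once this is checked, the equivalences among the functorial and diagrammatic conditions---items~(2)--(13) together with~(I)---are obtained simply by transcribing the conclusions (A)--(O) of Theorem~\ref{thm Main extended} in this special case.

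It then remains to weave in the conditions peculiar to the naturally Mal'tsev context. For item~(1), I would prove the equivalence between ``$\C$ is naturally Mal'tsev'' and item~(9), the existence of a section of $\PreGrpd(\C)\to\Span(\C)$, following the argument of the Naturally Mal'tsev subsection: a natural family $p_D\colon D^3\to D$ induces a natural pregroupoid structure $p=p_D\langle\dom,\midop,\cod\rangle$ on every span, whereas conversely, evaluating a section of the pregroupoid functor on the span $(X,!,!)$ whose legs both target the terminal object recovers the operation $p_X$, since here $D(d,c)=X\times X\times X$. For the two geometric items I would invoke the remark following Corollary~\ref{cor:natMal'tsev}, which identifies dikite admissibility (item~(12)) with the assertion that every local product is a local coproduct; the split-epi comparison condition is then just the explicit form of the section $\sigma$ of $\langle p_1,p_2\rangle$ produced by that admissibility. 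Finally, the autonomous nine-morphism identity in the last item follows from Proposition~\ref{prop: autonomous}: with $\M$ the class of all spans, the auxiliary spans $(D(d,c),\overline d,\overline c)$ automatically belong to $\M$ and the required products exist, so every canonical pregroupoid is autonomous.

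The delicate point---and the step I expect to be the main obstacle---is the identification underlying the equivalence with item~(1). One must verify not merely that a natural Mal'tsev operation on triple products yields a pregroupoid structure and vice versa, but that the two naturality squares genuinely coincide under the identification $D(d,c)\cong X\times X\times X$ for discrete spans, so that the datum of a natural operation $X^3\to X$ is \emph{the same} as that of a natural pregroupoid structure on all spans. Matching these two naturality constraints, rather than obtaining a one-directional implication, is what secures item~(1) as a genuine member of the equivalence class rather than a mere consequence.
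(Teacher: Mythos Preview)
Your proposal is correct and mirrors the paper's own treatment: the paper presents Theorem~\ref{thm Main naturally} precisely as the specialization of Theorem~\ref{thm Main extended} to $\M=$ all spans, with item~(1) handled via the Naturally Mal'tsev subsection, the local-coproduct and split-epi comparison conditions via the remark following Corollary~\ref{cor:natMal'tsev}, and the autonomous identity via Proposition~\ref{prop: autonomous}. One small caution: your claim that ``kernel pairs and finite products guarantee the box construction'' cites item~(3) of Definition~\ref{def: basics}, but the relevant observation there only asserts existence of $D(\square)$ under \emph{all} finite limits, and it is not obvious that the weaker hypotheses of the theorem suffice---the paper itself does not address this and indeed hedges in the remark immediately following the theorem.
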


\begin{remark}
The hypotheses above appear to be strictly weaker than the existence of all finite limits. Indeed, finite products already include a terminal object, and kernel pairs provide a particular class of equalizers. However, we only require pullbacks of split epimorphisms along split epimorphisms, rather than all pullbacks. We note, though, that we do not have a concrete example of a category in our context that lacks all finite limits.
\end{remark}

\section{Conclusion}

Condition (H) in Theorem~\ref{thm Main extended} shows great versatility in encoding internal categorical structures on spans or on reflexive graphs admitting a unique multiplication. It also motivates the definition of dikite diagrams and the corresponding notion of admissibility. 

Furthermore, centralization between two relations over the same base object (see, for example, \cite{MF47} and the references therein) can be expressed in this framework. Indeed, given two spans of the form $(A,\alpha,\beta f)$ and $(C,\beta g,\gamma)$, regarded as relations, they centralize each other precisely when the associated dikite diagram---such as the one displayed in Definition~\ref{def: simple}, item~(2)---is admissible. In this situation the relations need not be reflexive; rather, they may be relations in which one leg factors through a split epimorphism. This perspective is also relevant to the study of categorical ideals in the sense of~\cite{MFUvdL17}.

A further direction for research is suggested by the observation that every category possesses a kind of \emph{Mal'tsev signature}, consisting of the class of spans $(D,d,c)$ such that every dikite diagram with direction $(D,d,c)$ is admissible. With this notion, a category is naturally Mal'tsev if and only if its signature is the class of all spans; it is Mal'tsev if and only if its signature contains all relations; and it is weakly Mal'tsev if and only if its signature contains all strong relations. This perspective opens the way for a systematic study of categories through their associated Mal'tsev signatures, suggesting new avenues to understand internal structures and their interactions in categorical algebra.

\section*{Acknowledgements}

This work has previously been funded by FCT/MCTES (PIDDAC) via  the projects: 
PAMI--ROTEIRO-0328-2013 (022158);
 UIDP-04044-2020; UIDB-04044-2020;  MATIS (CENTRO-01-0145-FEDER-000014 - 3362); CENTRO-01-0247-FEDER-(069665, 039969); as well as POCI-01-0247-FEDER-(069603, 039958, 039\-863, 024533).

Furthermore, the author acknowledges the project Fruit.PV and the Shota Rustaveli National Science Foundation of Georgia (SRNSFG), through grant FR-24-9660, \emph{Categorical methods for the study of cohomology theory of monoid-like structures: an approach through Schreier extensions}.

The author also acknowledges Fundação para a Ciência e a Tecnologia (FCT) for its financial support via the project CDRSP Funding (DOI: 10.544\-99/UID/04044/2025) and ARISE funding (DOI: 10.54499/LA/P/0112/2020), and by CDRSP and ESTG from the Polytechnic of Leiria.

\end{document}